\titleformat{\section}{\bfseries}{\thesection .}{0.5em}{}
\titleformat{\subsection}{\itshape}{\thesubsection .}{0.5em}{}
\begin{document}

\newtheorem{example}{Example}             
\newtheorem{algorithm}{Algorithm}
\newtheorem{thm}{Theorem}  
\newtheorem{definition}{Definition}
\newtheorem{axiom}{Axiom}
\newtheorem{prop}{Property}
\newtheorem{pro}{Proposition}
\newtheorem{lem}{Lemma}
\newtheorem{cor}{Corollary}
\newtheorem{rmk}{Remark}
\newtheorem{condition}{Condition}
\newtheorem{conclusion}{Conclusion}
\newtheorem{assumption}{Assumption}

\title{\Large A two-queue Markovian polling system with two priority levels in the first queue}
\author{
{\normalsize Chu Yuqing\thanks{Corresponding author\\
\textit{Email address}: chuyuqing@csu.edu.cn (Chu Yuqing), math\_lzm@csu.edu.cn (Liu Zaiming), wujinbiao@csu.edu.cn (Wu Jinbiao)}, Liu Zaiming, Wu Jinbiao}\\
\textit{\small Department of Mathematics and Statistics, Central
South University, Changsha, Hunan 410083, PR China}}
\date{}
\maketitle

\renewcommand{\baselinestretch}{1.2} 
\large\normalsize
\noindent \rule[0.5pt]{14.5cm}{0.6pt}\\
\noindent
\textbf{Abstract}\\
In this paper, we deal with a two-queue polling system attended by a single server. The server visits the queues according to a Markovian routing mechnism. There are two-class customers in the first queue. Customers of each queue are served in the exhaustive discipline. For this model, we study the joint number of customers, the cycle time and the waiting times. we are also devoted to deriving the exact asymptotics for the scaled delay in the heavy-traffic scenario. In addition, the scaled delays with large switch-over times are discussed for the first time in the Markovian polling systems. Finally, we provide some simulations to surpport the asymptotic results.

\vspace{0.3cm}
\noindent
Keywords:\\
Polling System, Markovian Routing, Priority, Heavy-traffic, Large Switch-over Time, Simulation

\noindent \rule[0.5pt]{14.5cm}{0.6pt}\\
\noindent
\textbf{Mathematics Subject Classification (2010):} 60K25

\section{Introduction}\label{sec1}
In this paper, we analyze a two-queue Markovian polling system.
The random routing discipline is widely used in computer-communication systems with random access schemes and production systems with multi-type products, such as ALOHA and CSMA-CA (Carrier-Sense Multiple-Access Collision-Avoidance) algorithms.
In the modelling of cellular data services, access is randomly assigned to the multitude of users in a cell. More specifically, a time-slot (representing the right for transmission) is assigned to the user with the highest signal-to-noise ratio among all users in a cell, which is so-called opportunistic scheduling.
The opportunistic scheduling aims to improve the bandwidth efficiency by utilizing fading and shadowing of cellular users within a single cell.

Although there is quite an extensive amount of literature on polling systems, only very few papers concern with Markovian polling systems.
The few papers that can be found include \cite{Boxma1989}, where a pseudo-conservation law for mean waiting times was derived.
Later, a more general Markovian polling system with a dependent routing probabilities was studied in \cite{Srinivasan1991}.
More recently, Dorsman considered the applications to wireless random-access networks and analyzed some optimizing problems in \cite{Dorsman2014}.
He also considered the case of two queues specially in \cite{Dorsman2014a}, where the heavy-traffic behavior was investigated with the descendent set approach (DSA) for the first time.

Here we introduce priority policy to differentiate between high priority traffic, like streaming multimedia, and low priority traffic, like web browsing, to improve the QoS (Quality of Service) standard, just like the application of the threshold priority policy in \cite{Liu2014a,Liu2014b}.
Only very few papers treat priorities in polling systems.
Boon first proved the queue lengths at polling epochs do not depend on the service order and hence gave their Probability Generating Functions (PGFs) in priority polling systems in \cite{Boon2010}.
Besides, he applied the notion of delay-cycles, introduced in \cite{Kella1988}, and presented the LSTs of the waiting times.
The model was further extended with multiple priority levels in \cite{Boon2010a} and with mixed gated/exhaustive service discipline in \cite{Boon2009}.
Here, we apply the same methodologies to the Markovian polling systems.

Apart from the aforementioned performances, we also discuss the limiting delays with large switch-over times.
The case of large deterministic switch-over times is explored by using the DSA in combination with the Strong Law of Large Numbers for Renewal Processes, while the delay with large general switch-over times is studied under heavy traffic.

The remainder paper is organized as follows. We first give a detailed description of the model in Section \ref{sec2} and then present the analysis of the non-priority polling system as the preliminary work in Section \ref{sec2a}.
Section \ref{sec3} is dedicated to the derivation of the cycle time, the joint queue lengths at polling epochs and at an arbitrary time, and the waiting times.
The scaled delays in the heavy-traffic and with large switch-over times are explored in Section \ref{sec4} and Section \ref{sec5} respectively.
In Section \ref{sec6}, simulations are undertaken to test the validity of limiting theorems.
We finally conclude the whole procedure and propose some topics for further research in Section \ref{sec7}.

 \section{Model description}\label{sec2}
 We consider a single server Markovian polling model with
 two queues $Q_{1}$ and $Q_{2}$.
 Customers arriving at $Q_{2}$ are referred to type-$2$ customers, while $Q_{1}$ contains two types of customers: type-$H$ and type-$L$ customers. Type-$H$ customers have non-preemptive priority over type-$L$ customers.
 The buffer capacity of each queue is infinite.
 Each type of customers are served in FCFS discipline. Type-$i$ customers arrive independently according to a Poisson process with rate $\lambda_{i}$ and the service times $B_{i}$
 are mutually independent with LST $\widetilde{B}_{i}(s)$ and mean $\mathbb{E}B_i=\frac{1}{\mu_i}$, $i=H,L,2$.
  Then the traffic load of $Q_i$ equals $\rho_i=\lambda_i\mathbb{E}B_i$ ($i=1,2$) with $\lambda_{1}=\lambda_H+\lambda_L$ and $B_{1}=\frac{\lambda_H}{\lambda_{1}}B_{H}+\frac{\lambda_L}{\lambda_{1}}B_{L}$. Let $\rho_i=\frac{\lambda_i}{\mu_i}$ ($i=H,L$). Therefore, $\rho_1=\rho_H+\rho_L$.
It is assumed that each queue is served exhaustively.
 We also assume the total traffic load $\rho=\rho_1+\rho_2<1$, according to the conclusion in \cite{Foss1996}, which implies the system is stable.

 The Markovian routing mechnism means that the next queue polled will be determined by a discrete time Markov chain.
 For convenience, we denote the Markov chain as $M=\{d_n,n\geq 0\}$ with state space $I=\{1,2\}$, where $d_n=i$ means the $n$th polled queue after $t=0$ is $Q_{i}$.
 It is assumed that once the server completes service of $Q_{i}$, it begins a switch-over time $S_{ii}$ for another poll at $Q_{i}$ with probability $p_i<1$ and begins a switch-over time $S_{ij}$ for a poll at $Q_{j}$ with probability $1-p_i$ ($i=1,2$).
 Define
 \begin{eqnarray*}
   \pi_i&=&\lim_{n\rightarrow \infty}Pr\{d_n=i\}, \ \  i\in I, \\
   r_{ij}&=&Pr\{d_n=i | d_{n+1}=j\}, \ \ \  i,j\in I,\  n=0,1,\ldots
 \end{eqnarray*}
With the notations, the average duration of an arbitrary switch-over time is given by $\sigma=\sum_{i=1}^2\sum_{j=1}^2 r_{ij}\pi_jE[S_{ij}]$. And it is easy to compute
\begin{align*}
  &\pi_1=\frac{1-p_2}{2-p_1-p_2}, & &\pi_1=\frac{1-p_1}{2-p_1-p_2}, \\
  &r_{11}=p_1, && r_{12}=1-p_2,\\
  &r_{21}=1-p_1, && r_{22}=p_2.
\end{align*}

\section{Preliminary}\label{sec2a}
To avoid the tedius calculations, we follow the same idea in \cite{Boon2010} to give the joint number of customers at polling epochs.
 This section contains some preliminaries concerning the analysis of non-priority polling system. ``Non-priority'' means that we combine type-H and type-L customers into general type-1 customers with  arriving rate $\lambda_{1}$ and service times $B_{1}$. For compactness, we omit the derivation process, which can be referred to \cite{Dorsman2014a}. It is noted that throughout the paper, for the random variable $X$, $\widetilde{X}(\cdot)$ denotes its PGF or LST.

Let $K_{ij}$ denote the number of type-$j$ customers arriving at $Q_i$ during a busy period. Then its PGF is given by
\begin{equation*}
  \widetilde{K}_{ij}(z)=\widetilde{\theta}_i(\lambda_j(1-z)),\ \ \  i,j=1,2,
\end{equation*}
where $\theta_i$ denotes a busy period in an $M/G/1$ queue with arrival rate $\lambda_i$ and service rate $\mu_i$.

Let $M_{ij}^{(k)}$ be the number of arriving type-$k$ customers during a switch-over time $S_{ij}$. Then the PGF of the joint distribution of $M_{ij}^{(k)}$ is given by
\begin{equation*}
  \widetilde{M}_{ij}(z_1,z_2)=\mathbb{E}[z_1^{M_{ij}^{(1)}}z_2^{M_{ij}^{(2)}}]=\widetilde{S}_{ij}(\lambda_1(1-z_1)+\lambda_2(1-z_2)),\ \ \  i,j=1,2.
\end{equation*}

Along the arguments in \cite{Dorsman2014a}, the PGFs of the joint queue lengths at the polling epochs at $Q_i$, denoted by $\widetilde{V}_{b_i}(z_1,z_2)$, are given by
\begin{align}
 \widetilde{ V}_{b_1}(z_1,z_2)&=r_{11}\widetilde{M}_{11}(z_1,z_2)\prod_{j=0}^\infty a_1(f_1^{(j)}(z_2))+r_{21}\widetilde{M}_{21}(z_1,z_2)\prod_{j=0}^\infty a_2(f_2^{(j)}(z_1)),\label{3.1-1a}\\
  \widetilde{V}_{b_2}(z_1,z_2)&=r_{12}\widetilde{M}_{12}(z_1,z_2)\prod_{j=0}^\infty a_1(f_1^{(j)}(z_2))+r_{22}\widetilde{M}_{22}(z_1,z_2)\prod_{j=0}^\infty a_2(f_2^{(j)}(z_1)),\label{3.1-1b}
  \end{align}
  where
  \begin{align*}
  &f_1(z_2)=\widetilde{K}_{21}(\widetilde{K}_{12}(z_2)), \qquad f_2(z_1)=\widetilde{K}_{12}(\widetilde{K}_{21}(z_1)),\\
  &f_1^{(0)}(z_2)=z_2,\qquad \qquad \qquad f_1^{(j)}(z_2)=f_1(f_1^{(j-1)}(z_2)),\ \ \   j=1,2,\ldots,\\
  &f_2^{(0)}(z_1)=z_1,\qquad\qquad \qquad f_2^{(j)}(z_1)=f_2(f_2^{(j-1)}(z_1)),\ \ \   j=1,2,\ldots,\\
  &a_1(z_2)=\frac{r_{21}\widetilde{M}_{21}(\widetilde{K}_{12}(z_2),z_2)}{1-r_{11}\widetilde{M}_{11}(\widetilde{K}_{12}(z_2),z_2)}\frac{r_{12}\widetilde{M}_{12}(\widetilde{K}_{12}(z_2),f_1(z_2))}{1-r_{22}\widetilde{M}_{22}(\widetilde{K}_{12}(z_2),f_1(z_2))},\\
  &a_2(z_1)=\frac{r_{12}\widetilde{M}_{12}(z_1,\widetilde{K}_{21}(z_1))}{1-r_{22}\widetilde{M}_{22}(z_1,\widetilde{K}_{21}(z_1))}\frac{r_{21}\widetilde{M}_{21}(f_2(z_1),\widetilde{K}_{21}(z_1))}{1-r_{11}\widetilde{M}_{11}(f_2(z_1),\widetilde{K}_{21}(z_1))}.
  \end{align*}
Substituting $z_2=1$ in \eqref{3.1-1a}, we get
\begin{equation}
  \widetilde{V}_{b_1}(z_1,1)=r_{11}\widetilde{M}_{11}(z_1,1)+r_{21}\widetilde{G}_1(z_1),\label{3.1-1c}
  \end{equation}
  where
  \begin{equation}
      \widetilde{G}_1(z) = \widetilde{M}_{21}(z,1)\prod_{j=0}^\infty a_2(f_2^{(j)}(z)),\label{3.1-1e}
    \end{equation}
   Here $\prod_{j=0}^\infty a_2(f_2^{(j)}(z))$ can be interpreted as the total number of currently present type-1 customers whose descendants arrive during all the \emph{inter visit-end periods of $Q_2$} (see \cite{Dorsman2014a}) in the past. Besides, $\widetilde{G}_1(z)$ can be computed by the DSA as follows.
   \begin{equation}\label{3.5-1a}
 \widetilde{G}_1(z)=\widetilde{H}_1(z)\frac{1-r_{11}\widetilde{M}_{11}(z,1)}{r_{21}},
\end{equation}
  where $H_1$ satisfies
    \begin{equation}\label{3.5-1}
    \begin{split}
      \widetilde{H}_1(z)=&\prod_{c=0}^{\infty}\bigg[\widetilde{R}_1\left(\lambda_1(1-\widetilde{A}_{1,c-1}(z))+\lambda_2(1-\widetilde{A}_{2,c}(z))\right) \\
         &\times\widetilde{R}_2\left(\lambda_1(1-\widetilde{A}_{1,c-1}(z))+\lambda_2(1-\widetilde{A}_{2,c-1}(z))\right)\bigg].
    \end{split}
\end{equation}
with
\begin{align*}
\widetilde{R}_1(s)&=\widetilde{S}_{12}(s)\frac{r_{12}}{1-r_{22}\widetilde{S}_{22}(s)}, \\ \widetilde{R}_2(s)&=\widetilde{S}_{21}(s)\frac{r_{21}}{1-r_{11}\widetilde{S}_{11}(s)},\\
  \widetilde{A}_{1,c}(z) &=\widetilde{\theta}_1(\lambda_2(1-\widetilde{A}_{2,c}(z)))=\widetilde{K}_{12}(\widetilde{A}_{2,c}(z)), &\widetilde{A}_{1,-1}(z)&=z, \\
  \widetilde{A}_{2,c}(z) &=\widetilde{\theta}_2(\lambda_1(1-\widetilde{A}_{1,c-1}(z)))=\widetilde{K}_{21}(\widetilde{A}_{1,c-1}(z)), &\widetilde{A}_{2,-1}(z)&=1.
\end{align*}

Let $V_i$ denote the visit time at $Q_i$, $I_i$ denote the intervisit time of $Q_i$ and $C_i$ denote the cycle time starting with a visit beginning at $Q_i$ (i=1,2).
For exhaustive service discipline, it is evident that
  \begin{align}
  \widetilde{I}_1(s)&=\widetilde{V}_{b_1}(1-\frac{s}{\lambda_1},1), &\widetilde{C}_1(s)&=\widetilde{V}_{b_1}(\widetilde{\theta}_1(s)-\frac{s}{\lambda_1},1),\label{2.1}\\
  \widetilde{I}_2(s)&=\widetilde{V}_{b_2}(1,1-\frac{s}{\lambda_2}), &\widetilde{C}_2(s)&=\widetilde{V}_{b_2}(1,\widetilde{\theta}_2(s)-\frac{s}{\lambda_2}).\label{2.2}
\end{align}
In addition, with the balance arguments, the first moment can be obtained more easily. Here we just mention the results that $\mathbb{E}C_i=\frac{\sigma}{\pi_i(1-\rho)}$ and $\mathbb{E}I_i=\frac{\sigma(1-\rho_i)}{\pi_i(1-\rho)}$ ($i=1,2$). See \cite{Dorsman2014} for more details.

\section{Analysis}\label{sec3}
For a random variable $X$, we denote its PGF in priority polling system by $\widetilde{X}(\cdot,\cdot,\cdot)$ and let $\widetilde{X}(\cdot,\cdot)$ be the analogous quantities for non-priority system.

\subsection{Joint number of customers at polling epochs and cycle time}\label{subsec3.1}
Since the joint number of customers at polling epochs and the cycle time do not depend on the order of service, they remain the same as that in the non-priority system, which is rigorously proved in Lemma 1 in \cite{Boon2010}. Hence, we have the following theorems.
\begin{thm}\label{thm1}
The PGFs of the joint number of customers at each polling epochs are given by
  \begin{equation*}
  \widetilde{V}_{b_i}(z_H,z_L,z_2)=\widetilde{V}_{b_i}(\frac{\lambda_Hz_H+\lambda_Lz_L}{\lambda_1},z_2),\ \  \   i=1,2.\label{3.1-1}
  \end{equation*}
\end{thm}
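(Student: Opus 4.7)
The plan is to combine two ingredients: (i) the aggregate queue lengths $(N_1,N_2):=(N_H+N_L,N_2)$ at a polling epoch in the priority system have the same joint distribution as in the non-priority system of Section~\ref{sec2a}, and (ii) conditionally on the aggregate type-$1$ count $N_1$, the H/L split is a binomial thinning independent of everything else in the system. Together they deliver the claim via the elementary identity $\lambda_H(1-z_H)+\lambda_L(1-z_L)=\lambda_1(1-z_1)$ with $z_1=(\lambda_H z_H+\lambda_L z_L)/\lambda_1$.

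For step (i), I would invoke Lemma~1 of \cite{Boon2010}, already cited in the excerpt. Under exhaustive service the visit to $Q_1$ ends precisely when $Q_1$ is empty; the total work processed during that visit, $N_H B_H+N_L B_L$, and therefore the visit, intervisit and cycle durations, depend only on the sample paths of the arrival streams and service times, not on the within-queue service order. One can thus couple the priority and non-priority systems pathwise so that the routing chain, switch-over durations and polling epochs coincide, and consequently the marginal law of $(N_1,N_2)$ at the polling epoch of $Q_i$ is exactly the one whose PGF $\widetilde{V}_{b_i}(z_1,z_2)$ was derived in \eqref{3.1-1a}--\eqref{3.1-1b}.

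For step (ii), since the type-H and type-L arrivals are independent Poisson processes of rates $\lambda_H$ and $\lambda_L$, the aggregated stream of rate $\lambda_1$ admits the standard construction: first draw the aggregated arrival epochs, then independently label each epoch ``H'' with probability $\lambda_H/\lambda_1$. These labels are independent of the entire system constructed in (i) (which uses only arrival epochs, service times and routing decisions), and in particular independent of $(N_1,N_2)$. Hence
\begin{equation*}
\mathbb{E}\bigl[z_H^{N_H}\,z_L^{N_L}\bigm|N_1,N_2\bigr]=\left(\frac{\lambda_H z_H+\lambda_L z_L}{\lambda_1}\right)^{N_1},
\end{equation*}
and multiplying by $z_2^{N_2}$ and taking expectations gives $\widetilde{V}_{b_i}(z_H,z_L,z_2)=\widetilde{V}_{b_i}((\lambda_H z_H+\lambda_L z_L)/\lambda_1,z_2)$ for both $i=1,2$.

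The only real subtlety is guaranteeing that conditioning on which specific type-$1$ customers are still present at the polling epoch does not bias their H/L labels. This is precisely what the coupling in (i) buys: the identity of ``present'' versus ``already served'' type-$1$ customers is measurable with respect to the sigma-field generated by arrival epochs, service times and routing, while the H/L marks form an independent i.i.d.\ Bernoulli$(\lambda_H/\lambda_1)$ sequence with respect to that sigma-field. Thus the thinning is unbiased at every polling epoch, and no further computation is needed beyond the substitution already written down.
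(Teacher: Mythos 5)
Your overall plan---service-order invariance of the joint queue lengths at polling epochs (Lemma~1 of \cite{Boon2010}) combined with Poisson splitting of the type-$1$ customers present---is exactly the route the paper takes; the paper's proof is a one-line appeal to that lemma, with the thinning step left implicit. However, the way you justify the thinning contains a step that would fail as stated. You claim the H/L marks are independent of ``the sigma-field generated by arrival epochs, service times and routing''. In this model $B_H$ and $B_L$ have different distributions, so a customer's service time is determined in law by its mark; a coupling in which the marks are independent of the service times does not reproduce the priority system (conditional on the mark being H, such a customer's service time would still follow the mixture $B_1$ rather than $B_H$). Likewise, the within-queue service order during earlier visits to $Q_1$ is itself a function of the marks, so the marks cannot be independent of the full path of the system either. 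As written, your coupling constructs a different model, and concluding the unbiasedness of the labels from it is circular.

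The standard repair is to condition only on the history up to the polling epoch under consideration. Under exhaustive service, every type-$1$ customer present at a polling epoch arrived after the previous visit completion of $Q_1$ and has not yet entered service, so its mark has had no influence on the evolution up to that epoch: marks enter the dynamics only through the service time and the priority order, both of which matter only once service begins. Hence, given the history---in particular given $(N_1,N_2)$ and the identities of the waiting customers---the marks of the present customers are i.i.d.\ Bernoulli$(\lambda_H/\lambda_1)$, which yields $\mathbb{E}\bigl[z_H^{N_H}z_L^{N_L}\mid N_1,N_2\bigr]=\bigl((\lambda_H z_H+\lambda_L z_L)/\lambda_1\bigr)^{N_1}$ and then the substitution you wrote down. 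With this modification (and with step (i) kept as is, since the visit durations and hence the polling epochs depend only on the per-customer service requirements and not on the within-queue order), your argument is complete and coincides with the paper's intended proof.
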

In addition, \eqref{2.1} and \eqref{2.2} are still valid for the LSTs of the cycle times and intervisit times.

\subsection{Waiting time}\label{subsec3.3}
From the viewpoint of a type-$H$ customer, the polling system is an $M/G/1$ queue with multiple vacations: the intervisit time $I_1$ and the service time of a type-$L$ customer $B_L$.
Hence, we can use the concept of delay-cycles, introduced in \cite{Kella1988}, a methodology which is devoted to deriving the waiting times in $M/G/1$ queue with priorities and vacations, to compute the waiting time of a type-$H$ customer.

   The delay-cycle for a tagged type-$H$ customer is a cycle that starts with a certain initial delay that no type-$H$ customer is waiting in line and terminates at the moment that no type-$H$ customers are present in the system.
   For simplicity, the delay-cycle with initial delay $I_1$ is denoted by $I_H$ cycle and the delay-cycle with initial delay $B_L$ is denoted by $L_H$ cycle.
   It is noted that the arrival of a tagged type-$H$ customer must take place within some delay-cycle.
   The fraction of the time that the system is in an $L_H$ cycle equals $\lambda_L\cdot\frac{\mathbb{E}B_L}{1-\rho_H}=\frac{\rho_L}{1-\rho_H}$ and the fraction in an $I_H$ cycle equals $1-\frac{\rho_L}{1-\rho_H}=\frac{1-\rho_1}{1-\rho_H}$.
   By the Fuhrmann-Cooper decomposition Theorem in \cite{fuhrmann1985}, the LST of the waiting time of a type-$H$ customer is expressed by
   \begin{align*}\label{3.3-9}
     \widetilde{W}_H(s)&=\frac{(1-\rho_H)s}{s-\lambda_H(1-\widetilde{B}_H(s))}\left[\frac{1-\rho_1}{1-\rho_H}\frac{1-\widetilde{I}_1(s)}{s\mathbb{E}I_1}+\frac{\rho_L}{1-\rho_H}\frac{1-\widetilde{B}_L(s)}{s\mathbb{E}B_L}\right]\\
     &=\frac{\pi_1(1-\rho)\left[1-\widetilde{V}_{b_1}(1-\frac{s}{\lambda_1},1)\right]}{\sigma[s-\lambda_H(1-\widetilde{B}_H(s))]}+\frac{\rho_L\left[1-\widetilde{B}_L(s)\right]}{[s-\lambda_H(1-\widetilde{B}_H(s))]\mathbb{E}B_L}.
   \end{align*}

   Now we introduce a \emph{completion time} with LST of
 \begin{equation}\label{3.3-7a}
   \widetilde{B}_{L'}(s)=\widetilde{B}_L(s+\lambda_H(1-\widetilde{\theta}_H(s))),
 \end{equation}
 where $\theta_H$ denotes a busy period in an $M/G/1$ queue with arrival rate $\lambda_H$ and service rate $\mu_H$, which is actually the service time of a type-$L$ customer, plus the service time of its type-$H$ descendants.
 Then for a type-$L$ customer, the polling system is an $M/G/1$ queue with vacation $I_{1'}$ of LST:
   \begin{equation*}
  \widetilde{I}_{1'}(s)=\widetilde{I}_1(s+\lambda_H(1-\widetilde{\theta}_H(s))).
  \end{equation*}
  Customers arrive with rate $\lambda_L$ and have service time $B_{L'}$. The traffic load equals $\rho_{L'}=\frac{\rho_L}{1-\rho_H}$. By the Fuhrmann-Cooper decomposition Theorem in \cite{fuhrmann1985}, it is readily to obtain
 \begin{align*}
    \widetilde{W}_L(s)&=\frac{(1-\rho_{L'})s}{s-\lambda_L(1-\widetilde{B}_{L'}(s))}\frac{1-\widetilde{I}_{1'}(s)}{s\mathbb{E}I_{1'}}\\
    &=\frac{\pi_1(1-\rho)}{\sigma[s-\lambda_L(1-\widetilde{B}_{L'}(s))]}\left[1-\widetilde{V}_{b_1}(\widetilde{\theta}_H(s),1-\frac{s}{\lambda_L},1)\right].
   \end{align*}

   Note that the waiting time of a type-$2$ customer remains the same as that in the non-priority system. Hence, applying the conclusion in \cite{Borst1997} leads to
   \begin{equation*}\label{3.3-8}
    \widetilde{W}_2(s)=\frac{\pi_2(1-\rho)}{\sigma[s-\lambda_2(1-\widetilde{B}_2(s))]}\left[1-\widetilde{V}_{b_2}(1,1,1-\frac{s}{\lambda_2})\right].
   \end{equation*}

   Now all the derivations in this subsection can be summarized as:
   \begin{thm}\label{thm3}
   The LST of the waiting times of each type customer are expressed by
     \begin{eqnarray}
     \widetilde{W}_H(s) &=& \frac{\pi_1(1-\rho)\left[1-\widetilde{V}_{b_1}(1-\frac{s}{\lambda_1},1)\right]}{\sigma[s-\lambda_H(1-\widetilde{B}_H(s))]}+\frac{\lambda_L\left[1-\widetilde{B}_L(s)\right]}{[s-\lambda_H(1-\widetilde{B}_H(s))]},\label{3.3-7}\\
      \widetilde{W}_L(s)&=&\frac{\pi_1(1-\rho)}{\sigma[s-\lambda_L(1-\widetilde{B}_{L'}(s))]}\left[1-\widetilde{V}_{b_1}(\widetilde{\theta}_H(s),1-\frac{s}{\lambda_L},1)\right],\label{3.3-8} \\
       \widetilde{W}_2(s)&=&\frac{\pi_2(1-\rho)}{\sigma[s-\lambda_2(1-\widetilde{B}_2(s))]}\left[1-\widetilde{V}_{b_2}(1,1,1-\frac{s}{\lambda_2})\right].\label{3.3-9}
     \end{eqnarray}
   \end{thm}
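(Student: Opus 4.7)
The plan is to treat the three customer classes separately, in each case reducing the polling system, from the viewpoint of a tagged customer of the given type, to an $M/G/1$ queue with (possibly modified) service and vacations, and then to invoke the Fuhrmann--Cooper decomposition of \cite{fuhrmann1985}. The last step in each case will be the substitution of \eqref{2.1}--\eqref{2.2} to rewrite the LST of the intervisit time $I_1$ in terms of $\widetilde{V}_{b_1}$, which produces the form stated in Theorem \ref{thm3}.

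For a type-$H$ customer I would first argue that, because type-$H$ has non-preemptive priority over type-$L$ and is served exhaustively together with $Q_1$, the epochs at which no type-$H$ customer is present split the time axis into delay-cycles of only two kinds: cycles started by an intervisit period $I_1$ (the $I_H$-cycles) and cycles started by the service of a type-$L$ customer (the $L_H$-cycles), in the spirit of \cite{Kella1988}. A straightforward PASTA/renewal-reward computation gives that the long-run fractions of time spent in $L_H$- and $I_H$-cycles are $\rho_L/(1-\rho_H)$ and $(1-\rho_1)/(1-\rho_H)$ respectively. Fuhrmann--Cooper applied to the $M/G/1$ queue seen by type-$H$, with these two vacation components weighted by their time-fractions, yields \eqref{3.3-7} after substituting $\widetilde{I}_1(s)=\widetilde{V}_{b_1}(1-s/\lambda_1,1)$ and using $\mathbb{E}I_1=\sigma(1-\rho_1)/(\pi_1(1-\rho))$ to cancel the prefactor.

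For a type-$L$ customer I would use the classical completion-time trick: replace the actual service $B_L$ by $B_{L'}$ defined in \eqref{3.3-7a}, which lumps each type-$L$ service together with the entire type-$H$ busy period it triggers. Once this is done, type-$L$ customers see an $M/G/1$ queue with service $B_{L'}$, arrival rate $\lambda_L$, load $\rho_L/(1-\rho_H)$, and a single vacation type, namely the intervisit period $I_1$ evaluated on the shifted argument $s+\lambda_H(1-\widetilde{\theta}_H(s))$ that accounts for the type-$H$ work generated during $I_1$. Fuhrmann--Cooper then gives \eqref{3.3-8}, where \eqref{2.1} together with Theorem \ref{thm1} turns $\widetilde{I}_{1'}(s)$ into $\widetilde{V}_{b_1}(\widetilde{\theta}_H(s),1-s/\lambda_L,1)$. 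For type-$2$ the observation is that introducing the priority within $Q_1$ changes neither the total work brought to $Q_1$ during any interval, nor the switch-over structure, nor the exhaustive-service rule for $Q_2$; hence the distribution of $I_2$ is exactly that in the non-priority system, and the known exhaustive-discipline waiting-time formula from \cite{Borst1997} applies verbatim, giving \eqref{3.3-9}.

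The main technical obstacle I expect is the rigorous justification that, as seen by a tagged type-$H$ or type-$L$ customer, the Markovian polling system actually behaves as an $M/G/1$ queue with multiple vacations to which Fuhrmann--Cooper applies, that is, that the decomposition hypotheses (work-conservation during vacations, independence of vacation durations from arrivals during them, and PASTA-style sampling) are all satisfied in the Markovian-routing, two-class exhaustive setting. The type-$L$ case is the most delicate: one must verify that after the completion-time substitution the induced vacation $I_{1'}$ is indeed independent of the type-$L$ arrival process and that no type-$H$ work is double-counted. Once these structural checks are in place, the three formulas follow by direct substitution and the identities from \eqref{2.1}--\eqref{2.2}.
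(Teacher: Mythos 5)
Your proposal follows essentially the same route as the paper: the delay-cycle argument of Kella--Yechiali with the time-fractions $\rho_L/(1-\rho_H)$ and $(1-\rho_1)/(1-\rho_H)$ plus Fuhrmann--Cooper for type-$H$, the completion-time $B_{L'}$ with vacation $\widetilde{I}_{1'}(s)=\widetilde{I}_1(s+\lambda_H(1-\widetilde{\theta}_H(s)))$ for type-$L$, and the unchanged non-priority waiting time via \cite{Borst1997} for type-$2$, with \eqref{2.1}--\eqref{2.2} and Theorem \ref{thm1} supplying the substitutions into $\widetilde{V}_{b_1}$ and $\widetilde{V}_{b_2}$. This matches the paper's derivation in both structure and detail, so your argument is correct.
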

\subsection{Joint number of customers at an arbitrary time}\label{subsec3.4}
Unlike the joint number of customers at polling epochs, the joint number of customers at an arbitrary time does depend on the service order.
Now we turn this priority model into an equivalent non-priority model by dividing $Q_1$ into two queues $Q_H$ and $Q_L$.
To distinguish from the definitions of type-$H$ and type-$L$ customers, we take customers at $Q_H$ as type-$H'$ customers and customers at $Q_L$ as type-$L'$ customers. Type-$L'$ customers arrive with intensity $\lambda_L$, while type-$H'$ customers belong to \emph{smart customers} described in \cite{Boon2010b}, that if the server is not at $Q_{L}$, type-$H'$ customers arrive with intensity $\lambda_H$, otherwise there are no type-$H'$ customers arriving. In addition, type-$H'$ customers have service requirement $B_H$, and type-$L'$ customers have service requirement $B_{L'}$ with LST $\widetilde{B}_{L'}(s)$ defined in \eqref{3.3-7a}. Hence the traffic load of $Q_{L}$ equals $\rho_{L'}=\frac{\rho_L}{1-\rho_H}$ and the traffic load of $Q_H$ equals $\rho_{H'}=\rho_1-\rho_{L'}=\frac{(1-\rho_1)\rho_H}{1-\rho_H}$.

 Obviously, the non-priority model has the same joint queue lengths at visit beginning and visit ending of $Q_2$ as the original model. Besides, $\widetilde{V}_{b_i}(\cdot,\cdot,\cdot)$ and $\widetilde{V}_{c_i}(\cdot,\cdot,\cdot)$, the PGF of the joint queue lengths at visit beginning and visit ending of $Q_i$ ($i=H,L$) can be expressed by:
  \begin{align}
  \widetilde{V}_{b_H}(z_H,z_L,z_2)&=\widetilde{V}_{b_1}(z_H,z_L,z_2)=\widetilde{V}_{b_1}(\frac{\lambda_Hz_H+\lambda_Lz_L}{\lambda_1},z_2),\label{3.3-1}\\
  \widetilde{V}_{b_L}(z_H,z_L,z_2)&=\widetilde{V}_{c_H}(z_H,z_L,z_2)=\widetilde{V}_{b_1}(h_H(z_L,z_2),z_L,z_2),\label{3.3-2}
  \end{align}
  where $h_H(z_L,z_2)=\widetilde{\theta}_H(\lambda_L(1-z_L)+\lambda_2(1-z_2))$.

Let $L_i$ denote the number of type-$i$ customers at an arbitrary time ($i=H,L,2$) in the original model. For writing convenience, we take the notations $\mathbf{z}$ short for $(z_H,z_L,z_2)$ and $\mathbf{\lambda}(\mathbf{z})=\lambda_H(1-z_H)+\lambda_L(1-z_L)+\lambda_2(1-z_2)$.
By conditioning on the location $P$ of the server ($P\in \{V_i:i=H,L,2\}\cup \{S_{ij}:i,j=1,2\}$) in the equivalent model, we circumvent the PGFs of the joint number of customers at an arbitrary time, denoted by $\widetilde{L}(\mathbf{z})$. Let $L_i^{(V_j)}$ and $L_i^{(S_{jk})}$ denote the number of type-$i$ customers at an arbitrary time during $V_j$ ($j=H,L,2$) and $S_{jk}$ ($j,k=1,2$) respectively. Then
\begin{equation}\label{3.4-1}
 \widetilde{L}(\mathbf{z})=\sum_{i=H,L}\rho_{i'}\widetilde{L}^{(V_i)}(\mathbf{z})+\rho_{2}\widetilde{L}^{(V_2)}(\mathbf{z})+\frac{1-\rho}{\sigma}\sum_{i=1}^2\sum_{j=1}^2r_{ij}\pi_j\mathbb{E}S_{ij}\widetilde{L}^{(S_{ij})}(\mathbf{z}).
\end{equation}
Let $\widetilde{S}_{b}^{(V_i)}(z)$ denote the joint number of customers at a service beginning of $Q_i$ and $X^{past}$ denote the elapsed life of $X$. Then
\begin{equation}\label{3.4-2}
  \begin{split}
    \widetilde{L}^{(V_H)}(\mathbf{z})&= \widetilde{S_{b}}^{(V_H)}(\mathbf{z}) \widetilde{B_H}^{past}(\mathbf{\lambda}(\mathbf{z}))\\
      & =\frac{\gamma_{H'}z_H\left[\widetilde{V}_{b_H}(\mathbf{z})-\widetilde{V}_{c_H}(\mathbf{z})\right]}{z_H-\widetilde{B}_H(\mathbf{\lambda}(\mathbf{z}))}\frac{1-\widetilde{B}_H(\mathbf{\lambda}(\mathbf{z}))}{\mathbf{\lambda}(\mathbf{z})\mathbb{E}B_H}\\
  &=\frac{\pi_1(1-\rho)}{\rho_{H'}\sigma}\frac{z_H\left[\widetilde{V}_{b_H}(\mathbf{z})-\widetilde{V}_{b_H}(h_H(\mathbf{z}),z_L,z_2)\right]}{z_H-\widetilde{B}_H(\mathbf{\lambda}(\mathbf{z}))}\frac{1-\widetilde{B}_H(\mathbf{\lambda}(\mathbf{z}))}{\mathbf{\lambda}(\mathbf{z})},
  \end{split}
\end{equation}
where the last equation follows from $\gamma_{H'}=\frac{1-\rho}{1-\rho_{L'}}\frac{\pi_1}{\lambda_H\sigma}$ since there are no arrivals of type-$H$ customers during $V_L$.

Along the same argument, we get
\begin{equation}\label{3.4-3}
    \widetilde{L}^{(V_2)}(\mathbf{z})= \frac{\pi_2(1-\rho)}{\rho_2\sigma}\frac{z_2\left[\widetilde{V}_{b_2}(\mathbf{z})-\widetilde{V}_{b_2}(h_2(z_H,z_L,\mathbf{z}))\right]}{z_2-\widetilde{B}_2(\mathbf{\lambda}(\mathbf{z}))}\frac{1-\widetilde{B}_2(\mathbf{\lambda}(\mathbf{z}))}{\mathbf{\lambda}(\mathbf{z})}.
\end{equation}

Let $\widetilde{L}^{(V_{L'})}(\mathbf{z})$ denote the PGF of the joint number of customers at an arbitrary time during $B_{L'}$. Then for $\widetilde{L}^{(V_L)}(\mathbf{z})$, using the same method yields
\begin{equation}\label{3.4-4}
\begin{split}
    \widetilde{L}^{(V_L)}(\mathbf{z})&= \widetilde{S_{b}}^{(V_L)}(\mathbf{z}) \widetilde{L}^{(V_{L'})}(\mathbf{z})\\
    &= \frac{\pi_1(1-\rho)}{\lambda_L\sigma}\frac{z_L\left[\widetilde{V}_{b_L}(\mathbf{z})-\widetilde{V}_{b_L}(1,h_{L'}(z_2),z_2)\right]}{z_L-\widetilde{B}_{L'}(\mathbf{\lambda}(\mathbf{z}))}\widetilde{L}^{(V_{L'})}(\mathbf{z}),
\end{split}
\end{equation}
where $h_{L'}(z_2)=\widetilde{\theta}_{L'}(\lambda_2(1-z_2))$ and $\theta_{L'}$ is a busy time of an $M/G/1$ queue with arriving rate $\lambda_L$ and service requirement $B_{L'}$.

However, $\widetilde{L}^{(V_{L'})}(\mathbf{z})\neq \widetilde{B_{L'}}^{past}(\mathbf{\lambda}(\mathbf{z}))$, since the tagged time during a service time $B_{L'}$ depends on the number of arriving type-$H$ descendants of the type-$L$ customer. To solve $\widetilde{L}^{(V_{L'})}(\mathbf{z})$, we introduce a cyclic polling system with three queues: $Q_{H^*}, Q_{L^*}, Q_{2^*}$ (in cyclic order). Customers arrive at $Q_{i^*}$ according to a Poisson Process with intensity $\lambda_i$. We refer the customers at $Q_{i^*}$ as type-$i^*$ customers. Type-${H^*}$ customer has service requirement $B_H$ and other customers has infinitely small service requirement, which means that once their service begins, it can be completed immediately. It is assumed that there are no switch-over times while  $Q_{H^*}$ has a setup time $B_L$. It is easy to see that $\widetilde{L}^{(V_{L'})}(\mathbf{z})$ is just the PGF of the joint queue length at an arbitrary time of this system. Hence, we have
\begin{equation}\label{3.4-5}
\begin{split}
   \widetilde{L}^{(V_{L'})}(\mathbf{z})=&\rho_H\frac{1-\rho_H}{\rho_H}\frac{z_H\left[\widetilde{B_L}(\mathbf{\lambda}(\mathbf{z}))-\widetilde{B_L}(\mathbf{\lambda}(h_H(\mathbf{z}),z_L,z_2))\right]}{z_H-\widetilde{B}_H(\mathbf{\lambda}(\mathbf{z}))}\frac{1-\widetilde{B}_H(\mathbf{\lambda}(\mathbf{z}))}{\mathbf{\lambda}(\mathbf{z})\mathbb{E}B_H}\\
 &+(1-\rho_H)\frac{1-\widetilde{B}_L(\mathbf{\lambda}(\mathbf{z}))}{\mathbf{\lambda}(\mathbf{z})\mathbb{E}B_L}
\end{split}
\end{equation}
Substituting \eqref{3.4-5} into \eqref{3.4-4} yields the expression of $\widetilde{L}^{(V_L)}(\mathbf{z})$.
Besides, $L_i^{(S_{jk})}$ can be expressed as
\begin{eqnarray}
  \widetilde{L}^{(S_{1j})}(\mathbf{z})&=&\widetilde{V}_{b_1}(\widetilde{K}_{12}(z_2),z_2)\frac{1-M_{1j}(\mathbf{z})}{\mathbf{\lambda}(\mathbf{z})\mathbb{E}S_{1j}}, \label{3.4-7}\\
  \widetilde{L}^{(S_{2j})}(\mathbf{z})&=&\widetilde{V}_{b_2}\left(\frac{\lambda_Hz_H+\lambda_Lz_L}{\lambda_1},\widetilde{K}_{21}(\frac{\lambda_Hz_H+\lambda_Lz_L}{\lambda_1})\right)\frac{1-M_{2j}(\mathbf{z})}{\mathbf{\lambda}(\mathbf{z})\mathbb{E}S_{2j}}. \label{3.4-8}
\end{eqnarray}
Then the PGF of the joint number of customers at an arbitrary time can be computed by combining \eqref{3.4-2}-\eqref{3.4-8} with \eqref{3.4-1}.
\section{Heavy traffic asymptotics}\label{sec4}
In this section, we increase $\Lambda=\lambda_1+\lambda_2+\lambda_3$ to make the traffic load $\rho\rightarrow 1$ while keeping the ratios $\lambda_H:\lambda_L:\lambda_2$ fixed.
When $\rho\rightarrow 1$, the queue lengths tend to infinity.
Hence, we are devoted to studying the distributions of the scaled delay $\mathcal{W}_i=(1-\rho)W_i, i=H,L,2$.
For the Markovian priority polling system, the DSA can be the first choice.
Van der Mei \cite{VanderMei1999,VanderMei1999a} introduced this technique to give all moments and the distributions of the delay in the heavy traffic.
Dorsman also applied this technique to a non-priority Markovian polling system in \cite{Dorsman2014a}.
Throughout the remainder paper, we introduce a notation $\hat{x}$ such that $\hat{x}=\frac{x}{\rho}$, for example, $\hat{\rho}_H=\frac{\rho_H}{\rho}$.

 For preparations, we introduce two Lemmas first. The first one is the Theorem of Method of Moments introduced in \cite{VanderMei1999a}, which is applied to prove a gamma distributed limiting random variable.
 \begin{lem}\label{lem1}
 Let $\Gamma$ be a gamma-distributed random variable with shape parameter $\alpha+1$ and scale parameter $\mu$. Let $\{Y_n\}$ be a sequence of random variables with finite moments, satisfying
 \begin{equation*}
   \lim_{n\rightarrow \infty}\mathbb{E}Y_n^{k}=\mathbb{E}\Gamma^{k}, \ \ \ k=1,2,\ldots
 \end{equation*}
 Then $Y_n\overset{d}{\rightarrow} \Gamma$.
 \end{lem}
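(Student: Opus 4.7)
The plan is to prove this as a standard consequence of the classical method of moments from probability theory, reduced to checking that the gamma distribution is uniquely determined by its moments. The scheme is as follows. First, I recall the general method of moments theorem: if a probability measure $\mu_\infty$ on $\mathbb{R}$ is determined by its moments, and if a sequence of probability measures $\mu_n$ with finite moments of all orders satisfies $\int x^k\, d\mu_n \to \int x^k\, d\mu_\infty$ for every $k\ge 1$, then $\mu_n \Rightarrow \mu_\infty$ (see, e.g., Billingsley, \emph{Probability and Measure}, Theorem 30.2). So it suffices to verify the two hypotheses in our setting: (i) the moments of $Y_n$ converge to the moments of $\Gamma$ for all $k$, which is exactly what is assumed; (ii) the gamma distribution is moment-determinate.

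For step (ii), I would argue by exhibiting a moment generating function. The MGF of $\Gamma\sim\mathrm{Gamma}(\alpha+1,\mu)$ equals $(1-t/\mu)^{-(\alpha+1)}$ for $|t|<\mu$, which is analytic in a neighborhood of the origin. Equivalently, the moments $\mathbb{E}\Gamma^k = \mu^{-k}\,\Gamma(\alpha+1+k)/\Gamma(\alpha+1)$ grow only factorially, so Carleman's condition $\sum_k (\mathbb{E}\Gamma^{2k})^{-1/(2k)} = \infty$ holds. Either route confirms that no other probability distribution on $\mathbb{R}$ shares the same moment sequence.

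Combining (i) and (ii), the method of moments theorem yields $Y_n \xrightarrow{d}\Gamma$, which is the claim. The argument does not need tightness to be checked separately because moment convergence (together with a bound on second moments, which is automatic from convergence of $\mathbb{E}Y_n^2$) already implies tightness of $\{Y_n\}$ via Markov's inequality.

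The main obstacle, such as it is, is the verification of moment-determinacy; once that is in hand the rest is a direct citation. Since the gamma law has an entire MGF on a disk around $0$, this obstacle is essentially trivial, and I expect the authors simply invoke Carleman's condition (or the Hausdorff/Stieltjes moment problem) and then appeal to the standard theorem. Hence my proposed proof would be quite short: state moment-determinacy of the gamma law, state the classical method-of-moments theorem, and observe that the hypotheses match.
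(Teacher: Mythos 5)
Your proof is correct and is the standard argument: the paper itself does not prove Lemma \ref{lem1} but simply cites it as the method-of-moments theorem from van der Mei's 1999 paper, and the justification behind that citation is exactly what you supply, namely convergence of all moments together with moment-determinacy of the gamma law (via Carleman's condition or the analyticity of its moment generating function near the origin). Nothing further is needed; the only nitpick is the rate-versus-scale convention for $\mu$ in the MGF, which does not affect the factorial growth of the moments or the conclusion.
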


Lemma \ref{lem2} is devoted to the expressions of the $k$th derivative $[f(g)]^{(k)}(x)$ of a general composite function $f(g(x))$, which is Theorem 1 presented in \cite{Mei2000}.
 \begin{lem}\label{lem2}
 For $k=1,2,\ldots,$ the $k$th derivative of composite function $f(g(x))$ is given by
 \begin{equation*}
   \left[f(g)\right]^{(k)}(x)=\sum_{\mathbf{m}^{(k)}\in S_k}c_k\left(\mathbf{m}^{(k)}\right)f^{(l_k)}\left(g(x)\right)\prod_{i=1}^k\left(g^{(i)}(x)\right)^{m_i}, l_k=\sum_{j=1}^km_j,
 \end{equation*}
 where
 \begin{equation*}
   S_k:=\left\{\mathbf{m}^{(k)}=(m_1,\ldots,m_k): m_j \text{are non-negative integers with} \sum_{j=1}^kjm_j=k\right\},
 \end{equation*}
 and $c_k\left(\mathbf{m}^{(k)}\right)$ can be calculated in the following recursive way: $c_1(1)=1$ and
 \begin{equation*}
   \begin{split}
     c_k\left(\mathbf{m}^{(k)}\right) =&c_{k-1}(m_1-1,m_2,\ldots,m_{k-1})I_{\{m_1>0\}}  \\
       & +\sum_{j=1}^{k-1}(m_j+1)c_{k-1}(m_1,\ldots,m_{j-1},m_{j}+1,m_{j+1}-1,m_{j+2},\ldots,m_{k-1})\\
       &\times I_{\{m_{j+1}>0\}},\ \  \ k=2,3,\ldots.
   \end{split}
 \end{equation*}
 \end{lem}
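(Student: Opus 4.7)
The plan is to prove this by induction on $k$, effectively rederiving Faà di Bruno's formula in the particular indexed form the authors use. The base case $k=1$ is immediate: $S_1=\{(1)\}$, $l_1=1$, and the stated formula reduces to $[f(g)]'(x)=c_1(1)f'(g(x))g'(x)$, which is the chain rule once we set $c_1(1)=1$.

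For the inductive step, assume the formula is valid at order $k-1$ and differentiate a generic summand
$$c_{k-1}(\mathbf{n})\,f^{(l_{k-1})}(g(x))\,\prod_{i=1}^{k-1}\bigl(g^{(i)}(x)\bigr)^{n_i},\qquad \mathbf{n}=(n_1,\dots,n_{k-1})\in S_{k-1},$$
once more with respect to $x$ using the product rule. Two families of contributions appear. First, differentiating $f^{(l_{k-1})}(g(x))$ via the chain rule produces $f^{(l_{k-1}+1)}(g(x))\,g'(x)$ times the unchanged product; this corresponds to the new multi-index $\mathbf{m}=(n_1+1,n_2,\dots,n_{k-1},0)$. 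Second, for each index $j$ with $n_j>0$, differentiating $(g^{(j)}(x))^{n_j}$ yields a factor of $n_j\,g^{(j+1)}(x)$ and drops one power of $g^{(j)}(x)$; this corresponds to $\mathbf{m}=(n_1,\dots,n_{j-1},n_j-1,n_{j+1}+1,n_{j+2},\dots,n_{k-1})$. In either case the constraint $\sum_i i\,n_i=k-1$ is promoted to $\sum_i i\,m_i=k$, so every term lies in $S_k$ as required.

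To extract the claimed recursion, I invert each map above and collect all preimages of a fixed $\mathbf{m}\in S_k$. The first family contributes the term $c_{k-1}(m_1-1,m_2,\dots,m_{k-1})$, which is admissible only when $m_1>0$, accounting for the indicator $I_{\{m_1>0\}}$. The $j$-th instance of the second family contributes $(m_j+1)\,c_{k-1}(m_1,\dots,m_j+1,m_{j+1}-1,\dots,m_{k-1})$, valid only when $m_{j+1}>0$ so that the preimage $n_{j+1}=m_{j+1}-1$ is non-negative; the multiplier $n_j=m_j+1$ reappears explicitly from the differentiation, matching the recursion in the lemma. Summing over all admissible preimages reproduces the stated formula for $c_k(\mathbf{m})$ exactly.

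The main obstacle is purely bookkeeping rather than analytic: one must verify that the two reindexing maps just described enumerate every preimage in $S_{k-1}$ of a given $\mathbf{m}\in S_k$ without overlap, that the indicator conditions correctly encode non-negativity of the reconstructed entries, and that the corner case $m_k>0$ (forcing $\mathbf{m}=(0,\dots,0,1)$) arises uniquely from the $j=k-1$ term of the second family, since differentiating $f$ cannot raise the order of any $g$-derivative to $k$. Once this combinatorial matching is checked, the inductive step closes and the lemma follows; the resulting formula is equivalent to the classical Faà di Bruno identity, rewritten in the non-standard coefficient convention adopted by van der Mei.
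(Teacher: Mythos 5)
Your inductive argument is correct: the base case is the chain rule, and the differentiate-and-collect step, with the two reindexing families (incrementing $m_1$ from the chain-rule factor $g'(x)$, and the shift $m_j\mapsto m_j-1$, $m_{j+1}\mapsto m_{j+1}+1$ from differentiating $(g^{(j)})^{n_j}$), reproduces the stated recursion for $c_k(\mathbf{m}^{(k)})$, including the observation that $m_k>0$ forces $\mathbf{m}^{(k)}=(0,\ldots,0,1)$ and that $m_1>0$ together with $\sum_j j m_j=k$ automatically rules out $m_k>0$, so the indicators suffice. Note, however, that the paper itself offers no proof of this lemma: it is quoted verbatim as Theorem 1 of van der Mei (2000), so there is no internal argument to compare against; your induction is essentially the standard derivation given in that cited reference, i.e.\ Fa\`a di Bruno's formula established by induction in the coefficient convention used there.
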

Applying Lemma \ref{lem1} and Lemma \ref{lem2} to the DSA expressions \eqref{3.5-1a} of $G_1$, we obtain the following Proposition, whose proof can be found in \cite{Dorsman2014a}.
 \begin{pro}\label{pro1}
 As $\rho\rightarrow 1$, we have
 \begin{equation*}
   \lim_{\rho\rightarrow 1}\mathbb{E}[(1-\rho)^kG_1^{k}]=\frac{\hat{\lambda}_1^k\prod_{j=0}^{k-1}(\alpha+j)}{\nu_1^k}, \ \ \ k=1,2,\ldots,
 \end{equation*}
where
 \begin{align*}
       &\alpha= \frac{2\hat{\rho}_1\hat{\rho}_2\mathbb{E}S^{tot}}{\hat{\lambda}_H\mathbb{E}B_H^2+\hat{\lambda}_L\mathbb{E}B_L^2+\hat{\lambda}_2\mathbb{E}B_2^2},\\
        &\nu_1= \frac{2\hat{\rho}_1}{\hat{\lambda}_H\mathbb{E}B_H^2+\hat{\lambda}_L\mathbb{E}B_L^2+\hat{\lambda_2}\mathbb{E}B_2^2},\\
      & \mathbb{E}S^{tot}=\frac{p_1}{1-p_1}\mathbb{E}S_{11}+\mathbb{E}S_{12}+\mathbb{E}S_{21}+\frac{p_2}{1-p_2}\mathbb{E}S_{22}=\frac{2-p_1-p_2}{(1-p_1)(1-p_2)}\sigma.
     \end{align*}
 Further, the random variable $(1-\rho)G_1$ converges in distribution to a Gamma-distributed random variable with shape parameter $\alpha+1$ and scale parameter $\frac{\nu_1}{\hat{\lambda}_1}$ respectively.
 \end{pro}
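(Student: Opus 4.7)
The plan is to verify the method-of-moments hypothesis of Lemma \ref{lem1} for the Gamma distribution with the indicated parameters. I would start from the DSA identity \eqref{3.5-1a}, writing $\widetilde{G}_1(z)=\widetilde{H}_1(z)(1-r_{11}\widetilde{M}_{11}(z,1))/r_{21}$, and extract factorial moments from $\widetilde{G}_1^{(k)}(1)$. Since $1-r_{11}=r_{21}$, the second factor equals $1$ at $z=1$ and contributes only subleading terms in $1-\rho$; the dominant behavior is that of $\widetilde{H}_1^{(k)}(1)$. Converting factorial moments into ordinary moments costs only lower-order corrections, so it suffices to pin down the leading $(1-\rho)^{-k}$ coefficient of $\widetilde{H}_1^{(k)}(1)$ and then invoke Lemma \ref{lem1}.

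The derivatives of $\widetilde{H}_1$ would be handled by taking the logarithm of the infinite product and applying Lemma \ref{lem2} term by term. Each factor $\widetilde{R}_i(\cdot)$ is evaluated at a linear combination of $1-\widetilde{A}_{j,c}(z)$; the recursive definitions of $\widetilde{A}_{1,c}$ and $\widetilde{A}_{2,c}$ in terms of the busy-period LSTs $\widetilde{\theta}_i$ yield simple two-term recursions for the derivatives $\widetilde{A}_{i,c}^{(j)}(1)$, which can be solved in closed form. The first derivatives $\widetilde{A}_{i,c}'(1)$ are powers of the mean offspring numbers of the two-type branching process indexed by the DSA tree and approach $1$ as $\rho\to 1$, so the sums $\sum_{c\geq 0}\widetilde{A}_{i,c}'(1)$ diverge like $(1-\rho)^{-1}$, while the higher-order derivatives carry additional factors of $\mathbb{E}B_i^2$ that remain bounded in the limit.

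The heavy-traffic asymptotics then reduce to a pole-order count. Among all multi-indices $\mathbf{m}^{(k)}\in S_k$ appearing in the expansion produced by Lemma \ref{lem2}, I would argue that only the ``all-first-order'' index $(k,0,\ldots,0)$ contributes at order $(1-\rho)^{-k}$; every higher-order derivative $\widetilde{A}_{i,c}^{(j)}(1)$ with $j\geq 2$ dampens the pole by at least one. The surviving contribution is a $k$-fold product of divergent geometric sums, whose heavy-traffic limit is governed by the second moments of the service times together with the total switch-over mean $\mathbb{E}S^{tot}$. After algebraic simplification this produces exactly $\hat{\lambda}_1^k \prod_{j=0}^{k-1}(\alpha+j)/\nu_1^k$, matching the moments of the target Gamma law, and Lemma \ref{lem1} then upgrades moment convergence to distributional convergence.

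The main obstacle is the combinatorial bookkeeping required to justify this pole-order count: Lemma \ref{lem2} produces a sum over many multi-indices, and the infinite product over $c$ adds another layer, so showing that every index other than $(k,0,\ldots,0)$ yields $o((1-\rho)^{-k})$ requires a careful tally of how many factors of $1-\rho$ each higher-order derivative $\widetilde{A}_{i,c}^{(j)}(1)$ brings in, summed over $c$. Once this dominance is established, identifying the limit with $\alpha$ and $\nu_1$ as defined in the proposition is a direct computation from the moment formulas of $\widetilde{R}_1,\widetilde{R}_2$ and the busy-period LSTs $\widetilde{\theta}_i$.
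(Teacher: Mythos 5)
Your proposal follows essentially the same route the paper relies on: the paper does not prove Proposition \ref{pro1} itself but cites \cite{Dorsman2014a}, describing the argument exactly as you outline it, namely applying Lemma \ref{lem2} to the DSA expression \eqref{3.5-1a}, isolating the dominant multi-index $(k,0,\ldots,0)$ in the heavy-traffic limit, and invoking the method-of-moments Lemma \ref{lem1} to upgrade moment convergence to convergence in distribution (the same pattern used in the paper's own proof of Proposition \ref{pro2}). Your sketch is consistent with that cited argument, with the acknowledged bookkeeping of lower-order terms being precisely the content carried out in the reference.
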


 Based on this Proposition, we can further get the following equations:

 \begin{pro}\label{pro2}
 As $\rho\rightarrow 1$, we have the limiting equations:
 \begin{align}
   &\lim_{\rho\rightarrow 1}\widetilde{G}_1(1-\frac{(1-\rho)s}{\lambda_1})=\left(\frac{\nu_1}{\mu_1+s}\right)^\alpha,\label{3.4-9} \\
   &\lim_{\rho\rightarrow 1}\widetilde{G}_1\Big(\frac{\lambda_H}{\lambda_1}\widetilde{\theta}_H((1-\rho)s)+\frac{\lambda_L}{\lambda_1}(1-\frac{(1-\rho)s}{\lambda_L})\Big)=\left(\frac{(1-\hat{\rho}_H)\nu_1}{(1-\hat{\rho}_H)\nu_1+s}\right)^\alpha.\label{3.4-10}
 \end{align}
 \end{pro}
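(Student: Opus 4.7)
The plan is to read both identities off Proposition~\ref{pro1}, which gives $(1-\rho)G_1\overset{d}{\rightarrow}\Gamma$ as $\rho\to 1$ with $\Gamma$ Gamma-distributed with the stated parameters. Each identity is then obtained by reducing the argument of $\widetilde G_1$ to the form $1-c(1-\rho)+o(1-\rho)$ for a suitable constant $c$, so that $\widetilde G_1(\,\cdot\,)=\mathbb{E}[(\,\cdot\,)^{G_1}]$ collapses in the limit to $\mathbb{E}[\exp(-c\Gamma)]$, which is the LST of $\Gamma$ at $c$.

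For \eqref{3.4-9}, I would set $u=(1-\rho)s/\lambda_1\to 0^{+}$ and write $(1-u)^{G_1}=\exp(G_1\log(1-u))$. The expansion $\log(1-u)=-u+O(u^2)$ gives $G_1\log(1-u)=-uG_1+O(u^2G_1)$; since $(1-\rho)G_1$ is tight by Proposition~\ref{pro1} and $u=O(1-\rho)$, the leading term $uG_1=s(1-\rho)G_1/\lambda_1$ converges in distribution to $s\Gamma/\hat\lambda_1$ (using $\lambda_1\to\hat\lambda_1$), while the remainder vanishes in probability. Since $0\le(1-u)^{G_1}\le 1$ for $u\in[0,1]$, bounded convergence yields $\widetilde G_1(1-u)\to\mathbb{E}[\exp(-s\Gamma/\hat\lambda_1)]$, and a direct evaluation of the Gamma LST produces the stated right-hand side.

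For \eqref{3.4-10} the same scheme applies once I identify the coefficient of $(1-\rho)$ in $1-z$, where $z$ is the argument of $\widetilde G_1$. Using $\widetilde\theta_H(t)=1-t\,\mathbb{E}\theta_H+O(t^2)$ with $\mathbb{E}\theta_H=1/(\mu_H(1-\rho_H))$ together with $\lambda_H/\lambda_1+\lambda_L/\lambda_1=1$, one finds
\begin{equation*}
1-z=\frac{\lambda_H}{\lambda_1}\bigl(1-\widetilde\theta_H((1-\rho)s)\bigr)+\frac{(1-\rho)s}{\lambda_1}=\frac{(1-\rho)s}{\lambda_1(1-\rho_H)}+o(1-\rho).
\end{equation*}
Repeating the argument from the previous paragraph with $s/\hat\lambda_1$ replaced by $s/(\hat\lambda_1(1-\hat\rho_H))$ (using also $\rho_H\to\hat\rho_H$) yields the limit $\mathbb{E}[\exp(-s\Gamma/(\hat\lambda_1(1-\hat\rho_H)))]$, which on evaluating the Gamma LST rearranges to the claimed $\bigl((1-\hat\rho_H)\nu_1/((1-\hat\rho_H)\nu_1+s)\bigr)^{\alpha}$.

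I expect the only real obstacle to be careful bookkeeping of the $(1-\rho)$-orders in the composed Taylor expansion for the second identity, in particular remembering that the $M/G/1$ busy-period mean carries the factor $1/(1-\rho_H)$. The usual difficulty of swapping limits with expectations is painless here: $z^{G_1}\in[0,1]$ whenever $z\in[0,1]$, so the dominated-convergence step requires no tail estimates on $G_1$.
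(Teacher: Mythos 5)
Your argument is correct, but it takes a genuinely different route from the paper. The paper proves \eqref{3.4-10} by the method of moments: it defines $Y$ through its LST as the composition $f(g(s))$ with $f=\widetilde G_1$, computes $\mathbb{E}Y^k=(-1)^k[f(g)]^{(k)}(0)$ via the composite-derivative formula of Lemma~\ref{lem2}, observes that after multiplying by $(1-\rho)^k$ only the term with $\mathbf{m}^{(k)}=(k,0,\ldots,0)$ (i.e. $l_k=k$) survives, inserts the moment limits of Proposition~\ref{pro1}, and concludes with Lemma~\ref{lem1}; equation \eqref{3.4-9} is simply referred to the literature. You instead transfer the weak convergence of $(1-\rho)G_1$ to a Gamma law through a first-order expansion of the PGF argument ($1-z=\frac{(1-\rho)s}{\lambda_1(1-\rho_H)}+o(1-\rho)$, using $\lambda_H\mathbb{E}\theta_H+1=\frac{1}{1-\rho_H}$) and a bounded-convergence step, handling both identities uniformly. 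Your route is shorter, needs only the first moment of $\theta_H$ rather than the full array of derivatives $g^{(k)}(0)$, and makes the probabilistic content transparent; the paper's route stays within its prepared machinery (Lemmas~\ref{lem1}--\ref{lem2}) and never has to justify exchanging the limit with the expectation, since the moment convergence does that work. One caution: when you ``evaluate the Gamma LST,'' use the parameters consistent with the moment formula in Proposition~\ref{pro1}, namely shape $\alpha$ and scale $\hat\lambda_1/\nu_1$ for the limit of $(1-\rho)G_1$; the verbal statement there (``shape $\alpha+1$, scale $\nu_1/\hat\lambda_1$''), taken literally, would not reproduce the right-hand sides of \eqref{3.4-9}--\eqref{3.4-10}, so your computation implicitly relies on the moment-consistent reading, which is the correct one (and note the $\mu_1$ in \eqref{3.4-9} should be $\nu_1$).
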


 \begin{proof}
 The proof of the first equation can be referred to \cite{Dorsman2014a}. For brevity, we only give the proof of the second equation.

   Note that there exists a single random variable $Y$ that satisfies
   \begin{equation*}
     \widetilde{Y}(s)=\mathbb{E}e^{-sY}=\widetilde{G}_1\Big(\frac{\lambda_H}{\lambda_1}\widetilde{\theta}_H((1-\rho)s)+\frac{\lambda_L}{\lambda_1}(1-\frac{(1-\rho)s}{\lambda_L})\Big)=f(g(s)),
   \end{equation*}
   where $f(z)=\widetilde{G}_1(z)$ and $g(s)=\frac{\lambda_H}{\lambda_1}\widetilde{\theta}_H(s)+\frac{\lambda_L}{\lambda_1}(1-\frac{s}{\lambda_L})$. It is easy to obtain
   \begin{align*}
     f^{(k)}(1)&=\mathbb{E}G_1^{k},\ \ \  k=1,2,\ldots, \\
     g^{(1)}(0)&=-\frac{\lambda_H}{\lambda_1}(1-\rho)\mathbb{E}\theta_H-\frac{1-\rho}{\lambda_1}=-\frac{1-\rho}{\lambda_1(1-\rho_H)},\\
     g^{(k)}(0)&=(-1)^k\frac{\lambda_H}{\lambda_1}(1-\rho)^k\mathbb{E}\theta_H^{k},\ \ \  k=2,3,\ldots.
   \end{align*}
 Then
 \begin{align*}
   \mathbb{E}Y^{k}=&(-1)^k \left[f(g)\right]^{(k)}(0)\\
   =&\sum_{\mathbf{m}^{(k)}\in S_k}c_k\left(\mathbf{m}^{(k)}\right)\mathbb{E}G_1^{l_k}\left(\frac{1-\rho}{\lambda_1(1-\rho_H)}\right)^{m_1}\prod_{i=2}^k\left(\frac{\lambda_H}{\lambda_1}(1-\rho)^i\mathbb{E}\theta_H^{i}\right)^{m_i}\\
   =&\sum_{\mathbf{m}^{(k)}\in S_k}(1-\rho)^kc_k\left(\mathbf{m}^{(k)}\right)\mathbb{E}G_1^{l_k}\left(\frac{1}{\lambda_1(1-\rho_H)}\right)^{m_1}\prod_{i=2}^k\left(\frac{\lambda_H}{\lambda_1}\mathbb{E}\theta_H^{i}\right)^{m_i}\\
   &\prod_{i=2}^k\left(\frac{\lambda_H}{\lambda_1}\mathbb{E}\theta_H^{i}\right)^{m_i}.
 \end{align*}
 Since $l_k=k$ holds iff $\mathbf{m}^{(k)}=(k,0,\ldots,0)$. Hence,
 \begin{equation*}
    \lim_{\rho\rightarrow 1}\mathbb{E}Y^{k}=\lim_{\rho\rightarrow 1}\left[(1-\rho)^k\mathbb{E}G_1^{k}\right]\left(\frac{1}{\lambda_1(1-\rho_H)}\right)^{k} =\frac{\prod_{j=0}^{k-1}(\alpha+j)}{[\nu_1(1-\hat{\rho}_H)]^k},
 \end{equation*}
 where the second equation follows from Proposition \ref{pro1}. By Lemma \ref{lem1}, equation \eqref{3.4-10} holds, which concludes the proof.
  \end{proof}

Now we present the main theorem to give the limiting distributions of the scaled delay.
 \begin{thm}\label{thm4}
 For $0\leq p_1,p_2<1$, the LSTs of the limiting scaled waiting time distributions are given by
 \begin{align}
     \lim_{\rho\rightarrow 1}\widetilde{\mathcal{W}}_H(s)&=(1-\hat{\rho}_{L'})\frac{1}{s(1-\hat{\rho}_1)\mathbb{E}S^{tot}}\left[1-\left(\frac{\nu_1}{\nu_1+s}\right)^\alpha\right]+\hat{\rho}_{L'},\label{3.4-15}\\
     \lim_{\rho\rightarrow 1}\widetilde{\mathcal{W}}_L(s)&=\frac{1-\hat{\rho}_H}{s(1-\hat{\rho}_1)\mathbb{E}S^{tot}}\left[1-\left(\frac{(1-\hat{\rho}_H)\nu_1}{(1-\hat{\rho}_H)\nu_1+s}\right)^\alpha\right],\label{3.4-16}\\
     \lim_{\rho\rightarrow 1}\widetilde{\mathcal{W}}_2(s)&=\frac{1}{s(1-\hat{\rho}_2)\mathbb{E}S^{tot}}\left[1-\left(\frac{\nu_2}{\nu_2+s}\right)^\alpha\right],\label{3.4-17}
     \end{align}
     where $\alpha$, $\nu_1$ and $\mathbb{E}S^{tot}$ are defined in Proposition \ref{pro1} and $\nu_2$ is defined in the same way as $\nu_1$ by $\nu_2= \frac{2\hat{\rho}_2}{\hat{\lambda}_H\mathbb{E}B_H^2+\hat{\lambda}_L\mathbb{E}B_L^2+\hat{\lambda_2}\mathbb{E}B_2^2}$.
     Equivalently,
    \begin{align*}
     \lim_{\rho\rightarrow 1}Pr(\mathcal{W}_H\leq t)&=(1-\hat{\rho}_{L'})Pr(UI_H\leq t)+\hat{\rho}_{L'},\\
     \lim_{\rho\rightarrow 1}Pr(\mathcal{W}_L\leq t)&=Pr(UI_L\leq t),\\
     \lim_{\rho\rightarrow 1}Pr(\mathcal{W}_2\leq t)&=Pr(UI_2\leq t),
     \end{align*}
     where $U$ is a uniformly $[0,1]$ distributed random variable and $I_i$ is a Gamma distributed random variable with shape parameter $\alpha+1$ and scaled parameter $\omega_{i}$ ($i=H,L,2$) with
     \begin{align*}
       \omega_H&=\nu_1, & \omega_L&=(1-\hat{\rho}_H)\nu_1, & \omega_2&=\nu_2.
     \end{align*}
Besides, $U$ and $I_i$ are mutually independent ($i=H,L,2$).
 \end{thm}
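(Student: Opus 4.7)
The plan is to substitute the heavy-traffic scaling $s \mapsto (1-\rho)s$ directly into the three LST formulas of Theorem \ref{thm3} and then extract the dominant order in $(1-\rho)$ as $\rho\uparrow 1$, using Proposition \ref{pro2} to handle the $\widetilde{G}_1$-piece inside $\widetilde{V}_{b_1}$ and Taylor expansions to handle the remaining deterministic factors. The identity $\pi_1 r_{21}/\sigma=\pi_2 r_{12}/\sigma=1/\mathbb{E}S^{tot}$, which follows immediately from the explicit formulas for $\pi_i$, $r_{ij}$ and $\mathbb{E}S^{tot}$ in Section \ref{sec2} and Proposition \ref{pro1}, will convert the coefficients into the form claimed in the theorem.

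For $\widetilde{\mathcal{W}}_H(s)=\widetilde{W}_H((1-\rho)s)$, I would first invoke \eqref{3.1-1c} to write $\widetilde{V}_{b_1}(1-(1-\rho)s/\lambda_1,1)=r_{11}\widetilde{M}_{11}(1-(1-\rho)s/\lambda_1,1)+r_{21}\widetilde{G}_1(1-(1-\rho)s/\lambda_1)$. The first summand equals $r_{11}\widetilde{S}_{11}((1-\rho)s)=r_{11}+O(1-\rho)$, while Proposition \ref{pro2}, equation \eqref{3.4-9}, gives $\widetilde{G}_1(1-(1-\rho)s/\lambda_1)\to(\nu_1/(\nu_1+s))^\alpha$. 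Combined with $r_{11}+r_{21}=1$, this makes the bracketed term in the numerator tend to $r_{21}[1-(\nu_1/(\nu_1+s))^\alpha]$. A Taylor expansion of $\widetilde{B}_H((1-\rho)s)$ shows that the denominator satisfies $(1-\rho)s-\lambda_H(1-\widetilde{B}_H((1-\rho)s))=(1-\rho)s(1-\hat\rho_H)+O((1-\rho)^2)$, so the $(1-\rho)$ factors cancel cleanly. The same Taylor argument sends the second summand in \eqref{3.3-7} to $\hat\rho_L/(1-\hat\rho_H)=\hat\rho_{L'}$. Using $(1-\hat\rho_{L'})/(1-\hat\rho_1)=1/(1-\hat\rho_H)$ to swap $(1-\hat\rho_H)$ for $(1-\hat\rho_{L'})/(1-\hat\rho_1)$, and $\pi_1 r_{21}/\sigma=1/\mathbb{E}S^{tot}$, yields \eqref{3.4-15}.

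For $\widetilde{\mathcal{W}}_L(s)$ I would apply Theorem \ref{thm1} to rewrite $\widetilde{V}_{b_1}(\widetilde\theta_H((1-\rho)s),1-(1-\rho)s/\lambda_L,1)$ as a function of $(\lambda_H\widetilde\theta_H((1-\rho)s)+\lambda_L-(1-\rho)s)/\lambda_1$; the argument to $\widetilde{G}_1$ is then of precisely the form covered by \eqref{3.4-10}, which delivers the $((1-\hat\rho_H)\nu_1/((1-\hat\rho_H)\nu_1+s))^\alpha$ factor. A Taylor expansion of $\widetilde{B}_{L'}$, using $\mathbb{E}B_{L'}=\mathbb{E}B_L/(1-\rho_H)$ so that $\rho_{L'}=\rho_L/(1-\rho_H)$, gives the denominator asymptotics $(1-\rho)s(1-\hat\rho_1)/(1-\hat\rho_H)$, and the same identity on $\pi_1 r_{21}/\sigma$ collects everything into \eqref{3.4-16}. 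The formula \eqref{3.4-17} for $\widetilde{\mathcal{W}}_2$ is obtained analogously: by the symmetry of the model, the counterpart of Proposition \ref{pro1} holds for the quantity $G_2$ appearing in the decomposition $\widetilde{V}_{b_2}(1,z_2)=r_{22}\widetilde{M}_{22}(1,z_2)+r_{12}\widetilde{G}_2(z_2)$, yielding the scale parameter $\nu_2$ in the limit.

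For the probabilistic interpretation, I would verify once and for all that if $I\sim\mathrm{Gamma}(\alpha+1,\omega)$ and $U\sim\mathrm{Uniform}[0,1]$ are independent, then by direct integration of $\mathbb{E}e^{-sUI}=\mathbb{E}[(1-e^{-sI})/(sI)]$ against the Gamma density one obtains $\mathbb{E}e^{-sUI}=\omega/(\alpha s)\cdot[1-(\omega/(\omega+s))^\alpha]$. Since $\nu_1/\alpha=1/((1-\hat\rho_1)\mathbb{E}S^{tot})$ and $\nu_2/\alpha=1/((1-\hat\rho_2)\mathbb{E}S^{tot})$ hold by the definitions in Proposition \ref{pro1}, each bracketed expression in \eqref{3.4-15}--\eqref{3.4-17} is precisely $\mathbb{E}e^{-sUI_i}$ for the respective $\omega_i$, and the mixture $(1-\hat\rho_{L'})\mathbb{E}e^{-sUI_H}+\hat\rho_{L'}\cdot 1$ in \eqref{3.4-15} translates directly into the mixed distribution stated for $\mathcal{W}_H$. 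The main obstacle I anticipate is the bookkeeping in the two $\widetilde{V}_{b_1}$-substitutions: making sure that the $(1-\rho)$ powers in the numerator from $\pi_1(1-\rho)[1-\widetilde{V}_{b_1}]$ exactly cancel those arising from the Taylor expansion of the denominator, and that the argument of $\widetilde{G}_1$ for the type-$L$ calculation really fits the functional form required by \eqref{3.4-10} rather than \eqref{3.4-9}.
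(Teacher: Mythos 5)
Your proposal is correct and follows essentially the same route as the paper: rewrite the waiting-time LSTs via \eqref{3.1-1c} (and Theorem \ref{thm1} for the type-$L$ argument so that it matches \eqref{3.4-10}), apply Proposition \ref{pro2} to the $\widetilde{G}_1$ terms, Taylor-expand the denominators, and collect constants with the identity $\pi_1 r_{21}/\sigma=1/\mathbb{E}S^{tot}$. Your explicit verification of the $UI_i$ interpretation via $\mathbb{E}\mathrm{e}^{-sUI}=\frac{\omega}{\alpha s}\bigl[1-\bigl(\frac{\omega}{\omega+s}\bigr)^{\alpha}\bigr]$ and the symmetric $G_2$ argument for \eqref{3.4-17} are details the paper states or omits, but they sit within the same method.
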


 \begin{proof}
 Rewriting equation \eqref{3.3-7} by \eqref{3.1-1c} leads to an alternative expression
  \begin{align}
     \widetilde{W}_H(s) =& \frac{\pi_1(1-\rho)\left[1-r_{11}\widetilde{M}_{11}(1-\frac{s}{\lambda_1},1)-r_{21}\widetilde{G}_1(1-\frac{s}{\lambda_1})\right]}{\sigma[s-\lambda_H(1-\widetilde{B}_H(s))]}\nonumber\\
     &+\frac{\rho_L\left[1-\widetilde{B}_L(s)\right]}{[s-\lambda_H(1-\widetilde{B}_H(s))]\mathbb{E}B_L},\label{3.4-12}
     \end{align}
 Taking the limit of \eqref{3.4-12} leads to
 \begin{align*}
     \lim_{\rho\rightarrow 1}\widetilde{\mathcal{W}}_H(s) =&\lim_{\rho\rightarrow 1}\widetilde{W}_H((1-\rho)s) \\
       = &\lim_{\rho\rightarrow 1} \frac{\pi_1(1-\rho)}{\sigma[(1-\rho)s-\lambda_H(1-\widetilde{B}_H((1-\rho)s))]}\\
       &\times \lim_{\rho\rightarrow 1}\left[1-r_{11}\widetilde{M}_{11}(1-\frac{(1-\rho)s}{\lambda_1},1)-r_{21}\widetilde{G}_1(1-\frac{(1-\rho)s}{\lambda_1})\right]\\
       &+\lim_{\rho\rightarrow 1}\frac{(1-\rho)\rho_L}{[(1-\rho)s-\lambda_H(1-\widetilde{B}_H((1-\rho)s))]\mathbb{E}B_L}\lim_{\rho\rightarrow 1}\frac{1-\widetilde{B}_L((1-\rho)s)}{\mathbb{E}B_L(1-\rho)}\\
       =&\frac{\pi_1}{\sigma s(1-\hat{\rho}_H)}\left[1-r_{11}-r_{21}\left(\frac{\nu_1}{\nu_1+s}\right)^\alpha\right]+\frac{\rho_L}{1-\rho_H},
 \end{align*}
 where the last equation follows from \eqref{3.4-9}. Since $\frac{\pi_1}{\sigma}=\frac{1}{r_{21}\mathbb{E}S^{tot}}$, the above equation is equivalent to \eqref{3.4-15}.

  Rewriting equation \eqref{3.3-8} by \eqref{3.1-1c} yields
  \begin{align}
   \widetilde{W}_L(s)=&\frac{\pi_1(1-\rho)}{\sigma[s-\lambda_L(1-\widetilde{B}'_L(s))]}\bigg[1-r_{11}\widetilde{M}_{11}\Big(\frac{\lambda_H}{\lambda_1}\widetilde{\theta}_H(s)+\frac{\lambda_L}{\lambda_1}(1-\frac{s}{\lambda_L}),1\Big)\nonumber\\
      &-r_{21}\widetilde{G}_1\Big(\frac{\lambda_H}{\lambda_1}\widetilde{\theta}_H(s)+\frac{\lambda_L}{\lambda_1}(1-\frac{s}{\lambda_L})\Big)\bigg]. \label{3.4-13}
  \end{align}
  Using Proposition \ref{pro2} to \eqref{3.4-13}, equations \eqref{3.4-16} can be proved in the similar way as well as \eqref{3.4-17}. The proofs are omitted here.
 \end{proof}

\section{Asymptotics with large switch-over times}\label{sec5}
In this section, we consider the limiting distributions of the scaled delay when the average total switch-over time $\sigma\rightarrow \infty$, which is equivalent to $\mathbb{E}S^{tot}\rightarrow \infty$. For convenience, we take $r=\mathbb{E}S^{tot}$ for short.
\subsection{Asymptotics for deterministic switch-over times}\label{subsec5.1}
In this subsection, we consider the case of the deterministic switch-over times. In this case, \eqref{3.5-1} becomes
\begin{equation}\label{3.5-1b}
  \widetilde{H}_1(z)=\mathrm{e}^{-r_1\sum\limits_{c=0}^{\infty}\left[\lambda_1(1-\widetilde{A}_{1,c-1}(z))+\lambda_2(1-\widetilde{A}_{2,c}(z))\right]-r_2\sum\limits_{c=0}^{\infty}\left[\lambda_1(1-\widetilde{A}_{1,c-1}(z))+\lambda_2(1-\widetilde{A}_{2,c-1}(z))\right]},
\end{equation}
where $r_1=\mathbb{E}R_1=\mathbb{E}S_{12}+\frac{p_2}{1-p_2}\mathbb{E}S_{22}$ and $r_2=\mathbb{E}R_2=\frac{p_1}{1-p_1}\mathbb{E}S_{11}+\mathbb{E}S_{21}$. Let $\alpha_{i,c}^{(k)}$ denote the $k$th moment of $\widetilde{A}_{i,c}(z)$ ($k=1,2,\ldots$). By differentiating $\widetilde{A}_{i,c}(z)$, we can compute $\alpha_{i,c}^{(1)}$ in the recursive way:
\begin{align}\label{3.5-2}
  \alpha_{1,c}^{(1)} & =\mathbb{E}\theta_1\lambda_2\alpha_{2,c}^{(1)}, & \alpha_{2,c}^{(1)} & =\mathbb{E}\theta_2\lambda_1\alpha_{1,c-1}^{(1)}.
\end{align}
Rewriting equation \eqref{3.5-2} to a matrix form yields
\begin{equation}\label{3.5-3}
  \left(\begin{matrix}
  \alpha_{1,c}^{(1)} \\
   \alpha_{2,c}^{(1)}
   \end{matrix}\right)
   =\left(\begin{matrix}
  \lambda_1\lambda_2\mathbb{E}\theta_1\mathbb{E}\theta_2 &0 \\
  \lambda_1\mathbb{E}\theta_2& 0
   \end{matrix}\right)
   \left(\begin{matrix}
  \alpha_{1,c-1}^{(1)} \\
   \alpha_{2,c-1}^{(1)}
   \end{matrix}\right)
   =\left(\begin{matrix}
  \lambda_1\lambda_2\mathbb{E}\theta_1\mathbb{E}\theta_2 &0 \\
  \lambda_1\mathbb{E}\theta_2& 0
   \end{matrix}\right)^{c+1}
   \left(\begin{matrix}
  1 \\
   0
   \end{matrix}\right).
\end{equation}
By differentiating equation \eqref{3.5-1b}, after substituting \eqref{3.5-3}, we obtain
\begin{align}\label{3.5-3a}
    \mathbb{E}H_1 & =r_1\sum_{c=0}^{\infty}\left(\lambda_1\alpha_{1,c-1}^{(1)}+\lambda_2\alpha_{2,c}^{(1)}\right)+r_2\sum_{c=0}^{\infty}\left(\lambda_1\alpha_{1,c-1}^{(1)}+\lambda_2\alpha_{2,c-1}^{(1)}\right) \nonumber\\
      &=(r_1+r_2)\sum_{c=-1}^{\infty}\left(\lambda_1\alpha_{1,c}^{(1)}+\lambda_2\alpha_{2,c}^{(1)}\right) \nonumber\\
      &=r\sum_{c=-1}^{\infty}(\lambda_1, \lambda_2)
      \left(\begin{matrix}
       \alpha_{1,c}^{(1)} \\
       \alpha_{2,c}^{(1)}
        \end{matrix}
      \right)\nonumber\\
      &=r\left[\lambda_1+\sum_{c=0}^{\infty}(\lambda_1, \lambda_2)\left(\begin{matrix}
  \lambda_1\lambda_2\mathbb{E}\theta_1\mathbb{E}\theta_2 &0 \\
  \lambda_1\mathbb{E}\theta_2& 0
   \end{matrix}\right)^{c+1}
   \left(\begin{matrix}
  1 \\
   0
   \end{matrix}\right)\right]\nonumber\\
   &=r\left[\lambda_1+\sum_{c=0}^{\infty}\lambda_1\lambda_2\mathbb{E}\theta_2(1+\lambda_1\mathbb{E}\theta_1)(\lambda_1\lambda_2\mathbb{E}\theta_1\mathbb{E}\theta_2)^c\right]\nonumber\\
   &=\frac{\lambda_1(1-\rho_1)r}{1-\rho}.
\end{align}

Introduce $\widetilde{G}_{L'}(z)$ such that
\begin{equation}\label{3.5-4}
  \widetilde{G}_{L'}(z)=\widetilde{G}_1(\frac{\lambda_H}{\lambda_1}\widetilde{\theta}_H(\lambda_L(1-z))+\frac{\lambda_L}{\lambda_1}z),
\end{equation}
and a random variable $K$ that satisfies
\begin{equation}\label{3.5-1c}
\begin{split}
  \widetilde{K}(z) =&\mathrm{e}^{-r_1\sum\limits_{c=0}^{\infty}\left[\lambda_H(1-\widetilde{B}_{H,c-1}(z))+\lambda_L(1-\widetilde{B}_{L,c-1}(z))+\lambda_2(1-\widetilde{B}_{2,c}(z))\right]}\\
  &\times\mathrm{e}^{-r_2\sum\limits_{c=0}^{\infty}\left[\lambda_H(1-\widetilde{A}_{H,c-1}(z))+\lambda_L(1-\widetilde{B}_{L,c-1}(z))+\lambda_2(1-\widetilde{B}_{2,c-1}(z))\right]},
\end{split}
\end{equation}
where
\begin{align*}
  \widetilde{B}_{H,c}(z) &=\widetilde{\theta}_H(\lambda_L(1-\widetilde{B}_{L,c}(z))+\lambda_2(1-\widetilde{B}_{2,c}(z))), &\widetilde{B}_{H,-1}(z)&=\widetilde{\theta}_H(\lambda_L(1-z)), \\
  \widetilde{B}_{L,c}(z) &=\widetilde{\theta}'_L(\lambda_2(1-\widetilde{B}_{2,c}(z))), &\widetilde{B}_{L,-1}(z)&=z \\
  \widetilde{B}_{2,c}(z) &=\widetilde{\theta}_2(\lambda_H(1-\widetilde{B}_{H,c-1}(z))+\lambda_L(1-\widetilde{B}_{L,c-1}(z))), &\widetilde{B}_{2,-1}(z)&=1.
\end{align*}
With some efforts, we can prove
\begin{equation}\label{3.5-5}
 \widetilde{G}_{L'}(z)=\widetilde{K}(z)\frac{1-r_{11}\widetilde{M}_{11}(\widetilde{\theta}_H(\lambda_L(1-z)),z,1)}{r_{21}}.
\end{equation}
Combining equation \eqref{3.5-1a}, \eqref{3.5-4} with \eqref{3.5-5} leads to
\begin{equation}\label{3.5-6}
 \widetilde{K}(z)=\widetilde{H}_1(\frac{\lambda_H}{\lambda_1}\widetilde{\theta}_H(\lambda_L(1-z))+\frac{\lambda_L}{\lambda_1}z).
\end{equation}
Differentiating \eqref{3.5-6}, we obtain the first moment of $K$:
\begin{equation*}
    \mathbb{E}K=\mathbb{E}H_1(\frac{\lambda_H}{\lambda_1}\mathbb{E}\theta_H\lambda_L+\frac{\lambda_L}{\lambda_1}) =\frac{\lambda_L(1-\rho_{L'})r}{1-\rho}.
\end{equation*}
Applying the Strong Law of Large Numbers for Renewal Reward Processes to the DSA expressions \eqref{3.5-1b} and \eqref{3.5-1c} yields $\frac{H_1}{r}\rightarrow \frac{ \mathbb{E}H_1}{r}$ and $\frac{K}{r}\rightarrow \frac{ \mathbb{E}K}{r}$ when $r$ tends to infinity. Here we just present the conclusions. Interested readers can refer to Theorem 1 in \cite{VanderMei1999} for more details.
\begin{pro}\label{pro3}
When $r\rightarrow \infty$, we have
\begin{align*}
  \frac{H_1}{r}&\overset{a.s.}{\rightarrow}\frac{\lambda_1(1-\rho_1)}{1-\rho}, &\frac{K}{r}&\overset{a.s.}{\rightarrow}\frac{\lambda_L(1-\rho_{L'})}{1-\rho}.
\end{align*}
\end{pro}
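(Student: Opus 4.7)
The plan is to establish the two a.s.\ convergence statements by a Strong Law of Large Numbers argument applied to a renewal-reward structure embedded inside the DSA expressions \eqref{3.5-1b} and \eqref{3.5-1c}, mirroring Theorem~1 of \cite{VanderMei1999}. First I would note that the expected values are essentially already in hand: equation \eqref{3.5-3a} gives $\mathbb{E}H_1/r = \lambda_1(1-\rho_1)/(1-\rho)$, and differentiating \eqref{3.5-1c} via a three-variable version of the matrix recursion \eqref{3.5-2}--\eqref{3.5-3} produces $\mathbb{E}K/r = \lambda_L(1-\rho_{L'})/(1-\rho)$. So the substance of the proposition is the strengthening of convergence of the mean to almost sure convergence.

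For the a.s.\ statement I would invoke the probabilistic interpretation given just after \eqref{3.5-1e}: in the deterministic-switch-over setting, $H_1$ is the total number of type-$1$ descendants, spread across all subsequent busy periods, of customers that arrive during the past inter-visit-end periods. For the cycle indexed by $c$, the number of ``root'' arrivals during the corresponding constant-length $R_1$ and $R_2$ intervals is Poisson with mean proportional to $r$, and conditionally on these root arrivals the descendant-tree sizes are i.i.d.\ copies of a finite-mean random variable that is independent of $r$. Thus each cycle's contribution is a compound Poisson sum whose intensity tends to infinity as $r\to\infty$, and the classical SLLN for compound Poisson sums yields a.s.\ convergence of (cycle contribution)$/r$ to its mean. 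Summing over $c$ then delivers $H_1/r \overset{a.s.}{\to} \mathbb{E}H_1/r$. The argument for $K/r$ is structurally identical; the additional type-$H$ branching occurring inside each type-$L$ service in \eqref{3.5-1c} simply appears as one more independent finite-mean factor in each descendant tree, and does not affect the SLLN mechanism.

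The main obstacle is justifying the exchange of the limit $r\to\infty$ with the infinite sum over past cycles $c\geq 0$. This requires a dominating series for the per-cycle contributions that is summable uniformly in $r$ after scaling by $r$. Fortunately the $2\times 2$ matrix appearing in \eqref{3.5-3} has spectral radius equal to $\frac{\rho_1\rho_2}{(1-\rho_1)(1-\rho_2)}$, which is strictly less than $1$ whenever $\rho_1+\rho_2=\rho<1$; consequently the expected contribution of the $c$-th cycle decays geometrically in $c$, producing a convergent geometric dominating series. A standard dominated-convergence argument then legitimizes the interchange of limit and sum, and completes the proof of Proposition \ref{pro3}.
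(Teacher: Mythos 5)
Your proposal follows essentially the same route as the paper: the paper likewise computes $\mathbb{E}H_1$ from \eqref{3.5-3a} and $\mathbb{E}K$ (via the composition identity \eqref{3.5-6} rather than your direct differentiation of \eqref{3.5-1c}), and then simply invokes the Strong Law of Large Numbers for renewal reward processes applied to the DSA expressions \eqref{3.5-1b} and \eqref{3.5-1c}, deferring all details to Theorem 1 of \cite{VanderMei1999}. Your additional justification of the limit--sum interchange via the spectral radius $\frac{\rho_1\rho_2}{(1-\rho_1)(1-\rho_2)}<1$ when $\rho<1$ is correct and merely makes explicit what the paper leaves to that citation.
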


\begin{thm}\label{thm5}
For $0\leq p_1,p_2<1$, when the switch-over times tend to infinity, we have
\begin{align}
     \lim_{r\rightarrow \infty}\mathbb{E}[e^{-s\frac{W_H}{r}}]=&(1-\rho_{L'})\frac{(1-\rho)}{s(1-\rho_1)}\left[1-\mathrm{e}^{-\frac{1-\rho_1}{1-\rho}s}\right]+\rho_{L'},\label{3.5-6aa}\\
     \lim_{r\rightarrow \infty}\mathbb{E}[e^{-s\frac{W_L}{r}}]=&\frac{(1-\rho)}{s(1-\rho_{L'})}\left[1-\mathrm{e}^{-\frac{1-\rho_{L'}}{1-\rho}s}\right],\label{3.5-6ab}\\
     \lim_{r\rightarrow \infty}\mathbb{E}[e^{-s\frac{W_2}{r}}]=&\frac{(1-\rho)}{s(1-\rho_2)}\left[1-\mathrm{e}^{-\frac{1-\rho_2}{1-\rho}s}\right].\label{3.5-6ac}
     \end{align}
     Equivalently,
    \begin{align*}
     \lim_{r\rightarrow \infty}Pr(\frac{W_H}{r}\leq t)&=(1-\rho_{L'})Pr(U_H\leq t)+\rho_{L'},\\
     \lim_{r\rightarrow \infty}Pr(\frac{W_L}{r}\leq t)&=Pr(U_L\leq t),\\
     \lim_{r\rightarrow \infty}Pr(\frac{W_2}{r}\leq t)&=Pr(U_2\leq t),
     \end{align*}
     where $U_i$ ($ i=H,L,2$) is a uniformly $[0,u_i]$ distributed random variable with
     \begin{align*}
       u_H &=\frac{1-\rho_1}{1-\rho}, &u_L &=\frac{1-\rho_{L'}}{1-\rho}, &u_H &=\frac{1-\rho_2}{1-\rho}.
     \end{align*}
\end{thm}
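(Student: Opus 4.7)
The plan is to push the almost-sure scaling of $H_1$ and $K$ from Proposition \ref{pro3} through the three waiting-time LSTs of Theorem \ref{thm3}. The preparatory step is to combine \eqref{3.1-1c} with \eqref{3.5-1a} into the compact factorisation
\begin{equation*}
1-\widetilde{V}_{b_1}(z,1)=\bigl(1-\widetilde{H}_1(z)\bigr)\bigl(1-r_{11}\widetilde{M}_{11}(z,1)\bigr),
\end{equation*}
so that after the substitution $s\mapsto s/r$ in \eqref{3.3-7}--\eqref{3.3-9}, only a single non-trivial limit remains for each queue, namely $\widetilde{H}_1$ (or $\widetilde{K}$, or by symmetry $\widetilde{H}_2$) evaluated at an argument of the form $1-O(1/r)$; every other ingredient ($M/G/1$ denominators, switch-over LSTs, $\widetilde{B}_L$, $\widetilde{B}_{L'}$) admits a first-order Taylor expansion in $s/r$.

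For $\widetilde{W}_H(s/r)$ the decisive step rewrites $(1-s/(r\lambda_1))^{H_1}=[(1-s/(r\lambda_1))^r]^{H_1/r}$; combining the deterministic limit $(1-s/(r\lambda_1))^r\to e^{-s/\lambda_1}$ with the almost-sure convergence $H_1/r\to\lambda_1(1-\rho_1)/(1-\rho)$ from Proposition \ref{pro3}, and invoking bounded convergence (the integrand lies in $[0,1]$ for $s\geq 0$), gives $\widetilde{H}_1(1-s/(r\lambda_1))\to\exp(-(1-\rho_1)s/(1-\rho))$. Together with $\widetilde{M}_{11}(1-s/(r\lambda_1),1)\to 1$, with the Taylor expansion $s/r-\lambda_H(1-\widetilde{B}_H(s/r))\sim(s/r)(1-\rho_H)$, with the algebraic identity $\pi_1 r_{21}/\sigma=1/r$, and with the convergence of the second summand $\lambda_L[1-\widetilde{B}_L(s/r)]/[s/r-\lambda_H(1-\widetilde{B}_H(s/r))]\to\rho_L/(1-\rho_H)=\rho_{L'}$, one recovers \eqref{3.5-6aa} after applying the identity $1-\rho_{L'}=(1-\rho_1)/(1-\rho_H)$.

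For $\widetilde{W}_L(s/r)$ the additional step is to reduce the three-argument PGF $\widetilde{V}_{b_1}(\widetilde{\theta}_H(s),1-s/\lambda_L,1)$ to $\widetilde{V}_{b_1}(g(s),1)$ via Theorem \ref{thm1}, with $g(s)=[\lambda_H\widetilde{\theta}_H(s)+\lambda_L-s]/\lambda_1$, and then to identify $\widetilde{G}_1(g(s))=\widetilde{G}_{L'}(1-s/\lambda_L)$ through \eqref{3.5-4}. The second SLLN of Proposition \ref{pro3} then gives $\widetilde{K}(1-s/(r\lambda_L))\to\exp(-(1-\rho_{L'})s/(1-\rho))$, which via \eqref{3.5-5} transfers to $\widetilde{G}_{L'}$ and, combined with $\lambda_L\mathbb{E}B_{L'}=\rho_{L'}$ in the denominator, produces \eqref{3.5-6ab}. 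The argument for $\widetilde{W}_2(s/r)$ is a mirror image of that for $\widetilde{W}_H$: because the intra-$Q_1$ priority rule does not affect $Q_2$, a symmetric factorisation $1-\widetilde{V}_{b_2}(1,z)=(1-\widetilde{H}_2(z))(1-r_{22}\widetilde{M}_{22}(1,z))$ holds and the symmetric SLLN $H_2/r\to\lambda_2(1-\rho_2)/(1-\rho)$ follows by repeating the computation in \eqref{3.5-3a} with indices swapped. The equivalent mixture-of-uniforms statement is then obtained by inverting $(1-e^{-su})/(su)$. The principal obstacle I anticipate is book-keeping rather than ideas: tracking which $O(1/r)$ corrections vanish and which feed into the leading order; a secondary concern is that the dominated-convergence step is immediate for $s\geq 0$ but requires a dominating bound $|1-s/(r\lambda_1)|^{H_1}\leq 1+o(1)$ for complex $s$ in the right half-plane if one wants the genuine LST statement rather than mere weak convergence.
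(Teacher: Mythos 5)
Your proposal follows essentially the same route as the paper's own proof: the same combination of \eqref{3.1-1c} with \eqref{3.5-1a} (which the paper applies in two successive substitutions rather than as your single factorisation of $1-\widetilde{V}_{b_1}(z,1)$), the same SLLN-based limits $\widetilde{H}_1(1-\frac{s}{\lambda_1 r})\rightarrow \mathrm{e}^{-\frac{1-\rho_1}{1-\rho}s}$ and $\widetilde{K}(1-\frac{s}{\lambda_L r})\rightarrow \mathrm{e}^{-\frac{1-\rho_{L'}}{1-\rho}s}$ drawn from Proposition \ref{pro3}, the same first-order expansion of the $M/G/1$ denominators together with the identity $\frac{\pi_1}{\sigma}=\frac{1}{r_{21}r}$, and the same reduction of $\widetilde{W}_L$ through \eqref{3.4-13}, \eqref{3.5-4} and \eqref{3.5-5}. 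The only difference is presentational: you write out the $W_L$ and $W_2$ cases (including the symmetric $\widetilde{H}_2$ construction for $Q_2$) that the paper dismisses with ``along the same methodology'', which is precisely what that remark intends.
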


\begin{proof}
From Propostion \ref{pro3}, we have
\begin{equation}\label{3.5-6a}
   \lim_{r\rightarrow \infty}\widetilde{H}_1(1-\frac{s}{\lambda_1r})=\lim_{r\rightarrow \infty}\mathbb{E}(1-\frac{s}{\lambda_1r})^{-\frac{\lambda_1r}{s}\frac{H_1}{r}\left(-\frac{s}{\lambda_1}\right)}
      =\mathbb{E}e^{-\frac{H_1}{r}\frac{s}{\lambda_1}}=e^{-\frac{1-\rho_1}{1-\rho}s},
\end{equation}
The same way gives
\begin{equation}\label{3.5-6b}
  \lim_{r\rightarrow \infty}\widetilde{K}(1-\frac{s}{\lambda_Lr})=e^{-\frac{(1-\rho_{L'})}{1-\rho}s}.
\end{equation}
By the expression \eqref{3.4-12} of $\widetilde{W}_H(s)$, we have
     \begin{equation*}\label{3.5-7}
   \begin{split}
     \lim_{r\rightarrow \infty}\mathbb{E}[e^{-s\frac{W_H}{r}}]=&\lim_{r\rightarrow \infty}\frac{\pi_1(1-\rho)\left[1-r_{11}\widetilde{M}_{11}(1-\frac{s}{\lambda_1r},1)-r_{21}\widetilde{G}_1(1-\frac{s}{\lambda_1r})\right]}{\sigma[\frac{s}{r}-\lambda_H(1-\widetilde{B}_H(\frac{s}{r}))]}\\
       &+\lim_{r\rightarrow \infty}\frac{\rho_L(1-\widetilde{B}_L(\frac{s}{r}))}{[\frac{s}{r}-\lambda_H(1-\widetilde{B}_H(\frac{s}{r}))]\mathbb{E}B_L},\\
       =&\lim_{r\rightarrow \infty}\frac{(1-\rho)\left[1-r_{11}\widetilde{M}_{11}(1-\frac{s}{\lambda_1r},1)-r_{21}\widetilde{G}_1(1-\frac{s}{\lambda_1r})\right]}{r_{21}s[1-\lambda_H\frac{1-\widetilde{B}_H(\frac{s}{r})}{\frac{s}{r}}]}\\
       &+\lim_{r\rightarrow \infty}\frac{\rho_L(1-\widetilde{B}_L(\frac{s}{r}))}{[\frac{s}{r}-\lambda_H(1-\widetilde{B}_H(\frac{s}{r}))]\mathbb{E}B_L},\\
       =&\lim_{r\rightarrow \infty}\frac{(1-\rho)\left[1-r_{11}-r_{21}\widetilde{H}_1(1-\frac{s}{\lambda_1r})\right]}{r_{21}s(1-\rho_H)}+\frac{\rho_L}{1-\rho_H}\\
       =&\frac{(1-\rho)}{s(1-\rho_H)}\left[1-e^{-\frac{1-\rho_1}{1-\rho}s}\right]+\frac{\rho_L}{1-\rho_H}.
   \end{split}
 \end{equation*}
where the second equation follows from $\frac{\pi_1}{\sigma}=\frac{1}{r_{21}r}$, the third equation follows from equation \eqref{3.5-1a} and the last equation results from equation \eqref{3.5-6a}, which is equivalent to \eqref{3.5-6aa}.

 By combining \eqref{3.4-12}, \eqref{3.4-13}, \eqref{3.5-6a} and \eqref{3.5-6b}, \eqref{3.5-6ab} and \eqref{3.5-6ac} can be proved along the same methodology. I won't go into much details here.
\end{proof}
\subsection{Asymptotics for nondeterministic switch-over times under heavy traffic}\label{subsec5.2}
The aim of the present subsection is to study the asymptotic delay for general switch-over times under the heavy traffic when the switch-over times tend to infinity, which actually is the heavy-traffic behaviors of \eqref{3.4-15}-\eqref{3.4-17}.
\begin{thm}\label{thm6}
  \begin{align}
     \lim_{r\rightarrow \infty}\lim_{\rho\rightarrow 1}\mathbb{E}[\mathrm{e}^{-s(1-\rho)\frac{W_H}{r}}]=&(1-\hat{\rho}_{L'})\frac{1}{s(1-\hat{\rho}_1)}\left[1-\mathrm{e}^{-(1-\hat{\rho}_1)s}\right]+\hat{\rho}_{L'},\label{3.5-8}\\
     \lim_{r\rightarrow \infty}\lim_{\rho\rightarrow 1}\mathbb{E}[\mathrm{e}^{-s(1-\rho)\frac{W_L}{r}}]=&\frac{1-\hat{\rho}_H}{s(1-\hat{\rho}_{L'})}\left[1-\mathrm{e}^{-\frac{(1-\hat{\rho}_{L'})}{1-\hat{\rho}_H}s}\right],\label{3.5-9}\\
     \lim_{r\rightarrow \infty}\lim_{\rho\rightarrow 1}\mathbb{E}[\mathrm{e}^{-s(1-\rho)\frac{W_2}{r}}]=&\frac{1}{s(1-\hat{\rho}_2)}\left[1-\mathrm{e}^{-(1-\hat{\rho}_2)s}\right].\label{3.5-10}
     \end{align}
     Equivalently,
    \begin{align*}
     \lim_{r\rightarrow \infty}\lim_{\rho\rightarrow 1}Pr(\frac{(1-\rho)W_H}{r}\leq t)&=(1-\hat{\rho}_{L'})Pr(U_{H1}\leq t)+\hat{\rho}_{L'},\\
     \lim_{r\rightarrow \infty}\lim_{\rho\rightarrow 1}Pr(\frac{(1-\rho)W_L}{r}\leq t)&=Pr(U_{L1}\leq t),\\
     \lim_{r\rightarrow \infty}\lim_{\rho\rightarrow 1}Pr(\frac{(1-\rho)W_2}{r}\leq t)&=Pr(U_{21}\leq t),
     \end{align*}
     where $U_{i1}$ ($i=H,L,2$) is a uniformly $[0,u_{i1}]$ distributed random variable with
     \begin{align*}
       u_{H1} &=1-\hat{\rho}_1, &u_{L1} &=\frac{1-\hat{\rho}_{L'}}{1-\hat{\rho}_H}, &u_{21}&=1-\hat{\rho}_2.
     \end{align*}
\end{thm}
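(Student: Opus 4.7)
The plan is to derive Theorem \ref{thm6} by first applying Theorem \ref{thm4} to pass to the inner $\rho\to 1$ limit, and then letting $r\to\infty$. Since $\mathcal{W}_i=(1-\rho)W_i$, the identity $\mathbb{E}[\mathrm{e}^{-s(1-\rho)W_i/r}]=\widetilde{\mathcal{W}}_i(s/r)$ lets me read the inner limit off \eqref{3.4-15}--\eqref{3.4-17} after replacing $s$ by $s/r$. After this substitution the prefactor $1/(s\,\mathbb{E}S^{tot})$ combines with the $1/r$ from $s/r$ to give a clean $1/s$, and the whole $r$-dependence is then isolated in the Gamma-type LST factors $(\nu_i/(\nu_i+s/r))^{\alpha}$ together with the priority-shifted analogue $((1-\hat\rho_H)\nu_1/((1-\hat\rho_H)\nu_1+s/r))^{\alpha}$ arising in the type-$L$ formula.

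The main mechanical ingredient is the observation, immediate from Proposition \ref{pro1}, that the Gamma shape and the scale parameters are linked by $\alpha=\nu_1\hat\rho_2 r=\nu_2\hat\rho_1 r$. Thus as $r\to\infty$ the shape parameter grows linearly in $r$ while the scales $\nu_1,\nu_2$ stay fixed, and I can apply the classical identity $\lim_{n\to\infty}(1+x/n)^n=\mathrm{e}^x$ to obtain
\[
\Bigl(\frac{\nu_i}{\nu_i+s/r}\Bigr)^{\alpha}=\Bigl(1+\frac{s}{r\nu_i}\Bigr)^{-\nu_i\hat\rho_{3-i}r}\longrightarrow \mathrm{e}^{-(1-\hat\rho_i)s},\qquad i=1,2,
\]
with the analogous computation at scale $(1-\hat\rho_H)\nu_1$ producing the exponential factor governing $W_L$. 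Substituting these three limits into \eqref{3.4-15}--\eqref{3.4-17} collapses each expression to the form $(1-\mathrm{e}^{-su})/(su)$ times a constant, which is exactly the LST of a uniform variable on $[0,u]$; this gives \eqref{3.5-8}--\eqref{3.5-10} and the distributional representations in terms of the $U_{i1}$.

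The algebraic simplification to the advertised prefactors and exponents uses $\hat\rho_1+\hat\rho_2=1$ and, for the type-$L$ delay, the relation $\hat\rho_{L'}=\hat\rho_L/(1-\hat\rho_H)$, equivalently $\hat\rho_2=(1-\hat\rho_H)(1-\hat\rho_{L'})$; the atom $\hat\rho_{L'}$ present in the LST of $\mathcal{W}_H$ is unaffected by the outer limit, just as in Theorem \ref{thm5}. The step requiring the most care, and the natural main obstacle, is the bookkeeping associated with iterating the two limits: since Theorem \ref{thm4} already delivers the pointwise LST limit as $\rho\to 1$, the $r\to\infty$ operation then acts on continuous functions of $s$, no joint uniformity is needed, and L\'evy's continuity theorem can be invoked sequentially to upgrade LST convergence to distributional convergence. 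The intricate part of the argument is therefore not analytic but algebraic, and lies chiefly in tracking the shifted scale $(1-\hat\rho_H)\nu_1$ through the exponential limit for $W_L$.
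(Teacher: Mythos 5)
Your method is exactly the paper's: plug $s/r$ into the heavy-traffic limits \eqref{3.4-15}--\eqref{3.4-17} (with $\mathbb{E}S^{tot}=r$, so the prefactor $1/(s\,\mathbb{E}S^{tot})$ evaluated at $s/r$ becomes $1/s$), observe that $\alpha$ grows linearly in $r$ while $\nu_1,\nu_2$ stay fixed, and turn the Gamma-type factors into exponentials via $(1+x/n)^{-n}\to \mathrm{e}^{-x}$; this is precisely the displayed computation the paper gives for \eqref{3.5-8}, and your handling of \eqref{3.5-10} is the same routine, as is the sequential use of L\'evy's continuity theorem.

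There is, however, a concrete problem with your type-$L$ claim, at exactly the point the paper hides behind ``the same arguments''. Carrying out your own algebra from \eqref{3.4-16}: the exponent is $\lim_{r\to\infty}\frac{\alpha s}{r(1-\hat\rho_H)\nu_1}=\frac{\hat\rho_2}{1-\hat\rho_H}s=(1-\hat\rho_{L'})s$ (using your identity $\hat\rho_2=(1-\hat\rho_H)(1-\hat\rho_{L'})$), and the prefactor is $\frac{1-\hat\rho_H}{s(1-\hat\rho_1)}=\frac{1}{s(1-\hat\rho_{L'})}$, i.e.\ the LST of a uniform random variable on $[0,\,1-\hat\rho_{L'}]$. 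This is \emph{not} \eqref{3.5-9} as stated, which is the LST of a uniform on $[0,\frac{1-\hat\rho_{L'}}{1-\hat\rho_H}]$; since $1-\hat\rho_{L'}=\frac{1-\hat\rho_1}{1-\hat\rho_H}$, the two differ by a factor $1-\hat\rho_H$ whenever $\hat\rho_H>0$. So your assertion that the simplification ``collapses'' to the advertised \eqref{3.5-9} cannot be correct: either \eqref{3.4-16} or \eqref{3.5-9} carries an error, and the cross-check against \eqref{3.5-6ab} (multiply $u_L=\frac{1-\rho_{L'}}{1-\rho}$ by the extra scaling factor $1-\rho$ and let $\rho\to 1$) supports $u_{L1}=1-\hat\rho_{L'}$ rather than $\frac{1-\hat\rho_{L'}}{1-\hat\rho_H}$. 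You should either present the corrected limit explicitly or flag the inconsistency, instead of claiming agreement with \eqref{3.5-9}; otherwise your write-up silently reproduces the paper's own gap.
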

\begin{proof}
Since equations \eqref{3.5-8}-\eqref{3.5-10} follow the same arguments, for compactness, we only present the derivation of \eqref{3.5-8}. By \eqref{3.4-15}, we have
\begin{equation*}
  \begin{split}
   \lim_{r\rightarrow \infty}\lim_{\rho\rightarrow 1}\mathbb{E}[\mathrm{e}^{-s(1-\rho)\frac{W_H}{r}}]&=\lim_{r\rightarrow \infty}(1-\hat{\rho}_{L'})\frac{1}{s(1-\hat{\rho}_1)}\left[1-\left(\frac{\nu_1}{\nu_1+\frac{s}{r}}\right)^\alpha\right]+\hat{\rho}_{L'}\\
      &=\lim_{r\rightarrow \infty}(1-\hat{\rho}_{L'})\frac{1}{s(1-\hat{\rho}_1)}\left[1-\left(1+\frac{s}{\nu_1r}\right)^{-\frac{\nu_1r}{s}\frac{\alpha_1s}{\nu_1}}\right]+\hat{\rho}_{L'}\\
      &=(1-\hat{\rho}_{L'})\frac{1}{s(1-\hat{\rho}_1)}\left[1-\mathrm{e}^{-\frac{\alpha_1s}{\nu_1}}\right]+\hat{\rho}_{L'}\\
      &=(1-\hat{\rho}_{L'})\frac{1}{s(1-\hat{\rho}_1)}\left[1-\mathrm{e}^{-(1-\hat{\rho}_1)s}\right]+\hat{\rho}_{L'},
  \end{split}
\end{equation*}
where $\alpha_1= \frac{2\hat{\rho}_1\hat{\rho}_2}{\hat{\lambda}_H\mathbb{E}B_H^2+\hat{\lambda}_L\mathbb{E}B_L^2+\hat{\lambda}_2\mathbb{E}B_2^2}$ and hence, $\alpha=\alpha_1 r$ and $\frac{\alpha_1}{\nu_1}=\hat{\rho}_2=1-\hat{\rho}_1$.
 \end{proof}
\begin{rmk}
  Compared to the non-priority model, Theorem \ref{thm4}-\ref{thm6} illustrate the following interesting phenomenons:
   \begin{enumerate}
     \item The scaled delay of a high-priority customer is a modified distribution of the scaled delay of a type-$1$ customer in the non-priority model.
     \item The scaled delay of a low-priority customer has the same distribution as the scaled delay of a type-$1$ customer in the non-priority model except the scale parameter.
     \item The scaled delay of a type-2 customer remains the same as that in the non-priority model.
   \end{enumerate}
\end{rmk}
\section{Numerical examples}\label{sec6}
In this section we test the validity of the asymptotic results in Theorem \ref{thm4}-\ref{thm6}, mainly by simulating the polling system with different $\rho$ and $r$, and then comparing the empirical Cumulative Distribution Function (CDF) of the scaled delay and the asymptotic delay. For simplicity, we consider a polling system with exponentially distributed service time and switch-over times.

\begin{table}[htbp]
\centering
 \caption{\label{tab.1}Parameter values in the test of the heavy-traffic behaviors}
 \begin{tabular}{lc}
  \toprule
  Parameter& Considered parameter values \\
  \midrule
  Traffic load & $\rho\in\{0.5, 0.8, 0.9, 0.95, 0.99\}$ \\
   Mean service time & $B=0.85$\\
  Probability of transitions & $p_1=0.4$, $p_2=0.3$\\
  Mean switch-over times & $\sigma=2.4$\\
  \bottomrule
 \end{tabular}
\end{table}
First we consider the polling system in the heavy traffic. The parameter values of the tested polling system can be found in Tab. \ref{tab.1}. We use Matlab to undertake simulations and each simulation runs until at least 2000 customers are served. We take down the waiting times of each customer and plot the CDF of each type of customer in Fig.\ref{fig.1}. The asymptotic distribution of the scaled delay is plotted in the same graph. Actually, the asymptotic distribution of the scaled delay is a residual life of a Gamma distribution or modified Gamma distribution, and hence, can be plotted by the Matlab function ``gamcdf". From Fig.\ref{fig.1}, it is readily showed that the approximation performs very well when $\rho$ is close to $1$.
 \begin{figure}[htp]
\centering
\includegraphics[width=4.7 in]{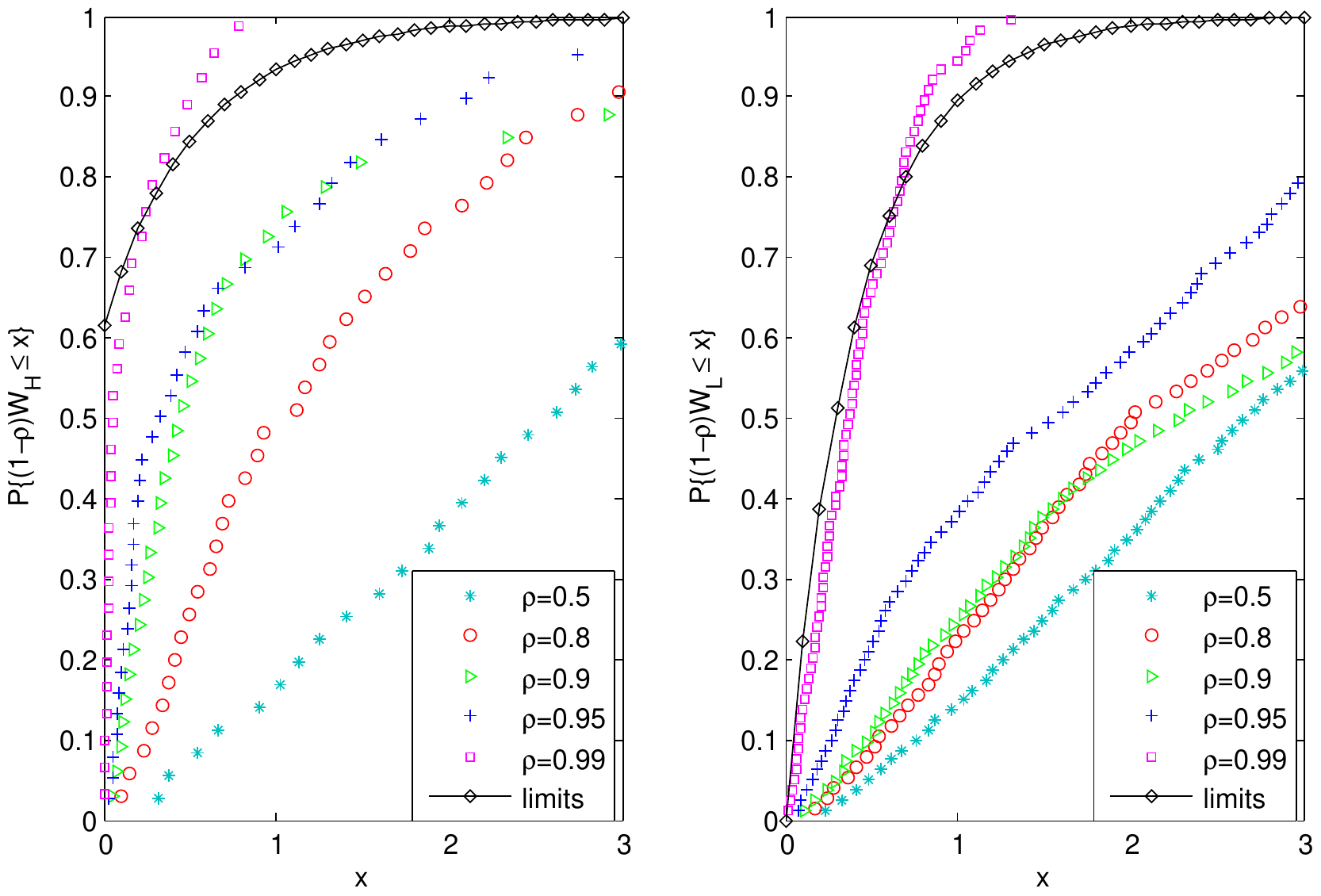} 
\caption {\small The CDFs of scaled delay of type-$H$ and type-$L$ customers for different load}
\label{fig.1}
\end{figure}

\begin{table}[htbp]
\centering
 \caption{\label{tab.2}Parameter values in the test under large deterministic switch-over times}
 \begin{tabular}{lc}
  \toprule
  Parameter& Considered parameter values \\
  \midrule
  Traffic load & $\rho=0.8$ \\
   Mean service time & $B=0.85$\\
  Probability of transitions & $p_1=0.4$, $p_2=0.3$\\
  Mean switch-over times & $r\in \{1, 10, 50, 100, 500\}$\\
  \bottomrule
 \end{tabular}
\end{table}
The scaled delay with large deterministic switch-over times converges to a uniform distribution, which is showed in Fig.\ref{fig.2} and the parameter values of the investigated polling system is given in Tab.\ref{tab.2}. With the same polling simulation procedure, we take down the waiting times of at least 5000 customers and plot their CDFs with Matlab function ``cdfplot". In Fig. \ref{fig.2}, the endpoints of the CDFs in the simulation converge to the limits while other points haven't converged completely. With regard to the running time and the tolerance error, the convergence trend can not be denied in Fig.\ref{fig.2}.
\begin{figure}[htp]
\centering
\includegraphics[width=4.65 in]{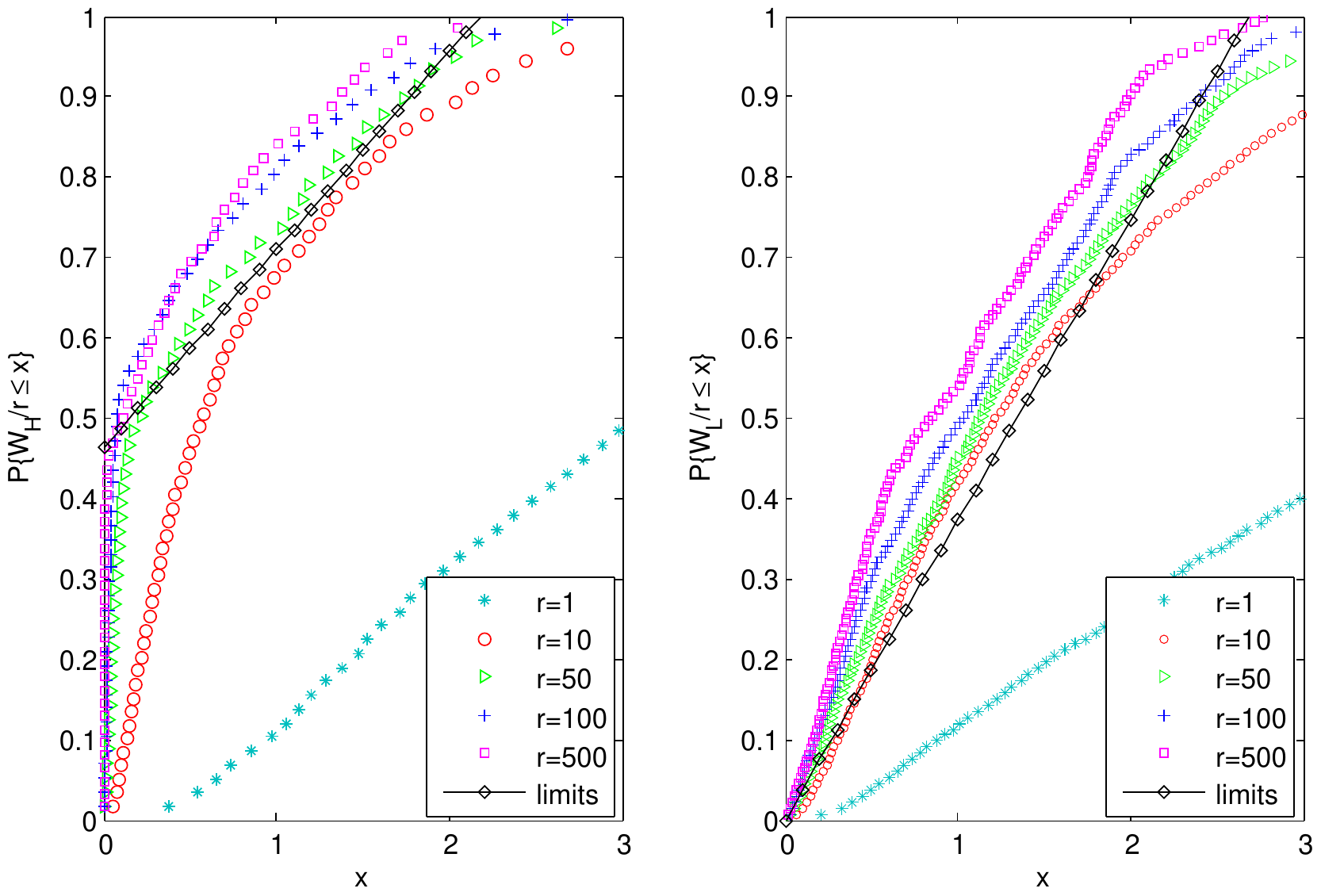} 
\caption {\small The CDFs of scaled delay of type-$H$ and type-$L$ customers under different deterministic switch-over times}
\label{fig.2}
\end{figure}


As for the case of general switch-over times, we consider the convergence process $\lim_{r\rightarrow \infty}\lim_{\rho\rightarrow 1}\frac{(1-\rho)W_i}{r}$. We undertake a simulation work with $\rho=0.99$ and exponentially distributed switch-over times with the same means as the case of the deterministic switch-over times. 40000 customers are served in the simulation procedure. However, the convergence process performs not well. One reason is for the superposition of two limits which enlarges the error of simulation, and the other is that the utilization of two times of scale operations leads to especially very long time required for the simulation to run. To test the validity of Theorem \ref{thm6}, the key is to exploit a new and effective simulation method.

 \section{Conclusions}\label{sec7}
In this paper, we have introduced the priority policy into a $2$-queue Markovian polling system and presented some performance measures like the joint number of customers and waiting times. Moreover, with the DSA, we also investigated the limits of the scaled delays in the case of the heavy-traffic and large switch-over times. Although we have made some achievements of the Markovian priority polling system, it leaves many extension works. Here we just list some topics for further research.
\begin{enumerate}
  \item \emph{Mixed gated/exhaustive service} \ In this case, we can follow the argument in \cite{Boon2009}.
  \item \emph{Multiple priority levels}\  Here we can follow the argument in \cite{Boon2010a}. As for the heavy traffic behaviors and asymptotics with large switch-over times, it is an interesting subject to exploit that whether there are some beautiful conclusions using the DSA.
  \item \emph{Simulation method under large general switch-over times}\ Obviously, the simulation method in our paper can not work effectively for this case. No such simulation work has been published. However, this simulation can help us to exploit the asymptotics of delays with renewal arrivals and to study the limits only with large switch-over times and not in the heavy-traffic.
\end{enumerate}

\end{document}